\newcommand{\N}{\mathbb{N}}
\newcommand{\Z}{\mathbb{Z}}
\newcommand{\F}{\mathbb{F}}
\newcommand{\Fq}{\mathbb{F}_q}
\newcommand{\Fqd}{\mathbb{F}_{q^2}}
\newcommand{\ord}{\mathop{\rm ord}}
\newcommand{\lcm}{\mathop{\rm lcm}}
\newcommand{\D}{\cdots}
\newcommand{\T}{\mathscr{T}}
\newcommand{\G}{\mathscr{G}}
\newcommand{\V}{\mathcal{V}}
\newcommand{\E}{\mathcal{E}}
\newcommand{\nup}{\nu_p}
\newtheorem{Th}{Theorem}
\newtheorem{Co}[Th]{Corollary}
\newtheorem{Df}[Th]{Definition}
\newtheorem{Le}[Th]{Lemma}
\newtheorem{Ex}[Th]{Example}
\newtheorem{Obs}[Th]{Remark}
\begin{document}

\title{The dynamics of a function family over quadratic extensions of finite fields}

\author{Hugo Teixeira}
\address{School of Mathematics and Statistics, Carleton University, Canada\\
\email{hugoteixeira@cmail.carleton.ca}}

\author{Fabio Brochero Martínez}
\address{Departamento de Matemática, UFMG, Brasil\\
\email{fbrocher@mat.ufmg.br}}


\maketitle
\date{\today}

\begin{abstract}
    Let $\Fq$ be the finite field with $q=p^s$ elements, where $p$ is an odd prime and $s$ a positive integer. In this paper, we analyze the function $f(X)=(cX^q+aX)(X^{q}-X)^{n-1}$, for $a,c\in\Fq$ and $n\geq 1$. Viewing $\Fqd$ as a two-dimensional vector space over $\Fq$, we obtain an explicit algebraic description of the induced dynamical system.
    We determine all possible cycle lengths, the exact number of cycles of each length, and give a complete classification of the trees attached to periodic points.
    The structure of the functional graph is shown to depend explicitly on arithmetic invariants of $\Fq$, including greatest common divisors and multiplicative characters.
    This provides a complete description of the functional graph of $f$ over $\Fqd$.
    
\end{abstract}
\keywords{Finite field; Functional graph; Arithmetic dynamics.}
\ccode{Mathematics Subject Classification 2020: 12E20, 11T06, 11T71}
\section{Introduction}

Let $\Fq$ be the finite field with $q=p^s$ elements, where $p$ is an odd prime and $s$ is a positive integer. For each arbitrary function $f:\Fq\to \Fq$, the repeated iteration of $f(X)$ gives us a dynamical system that can be represented as a directed graph. We define the {\em functional graph of $f$} over $\Fq$ as the directed graph $\G(f)=(\V, \E)$, where the set of vertices is $\V=\Fq$ and the set of directed edges is $\E=\{\langle x,f(x)\rangle\mid x\in X\}$.

In addition, we define the iterations of $f$ as $f^{(0)}(X)=X$ and, for any $n>0$, $f^{(n+1)}(X)=f(f^{(n)}(X))$. Since $f$ is defined over a finite set, for a fixed $a\in \Fq$, there are minimal positive integers $0\leq i<j$,  such that $f^{(i)}(a)=f^{(j)}(a)$. When $i>0$ we call the list $$a,f(a),f^{(2)}(a),\D,f^{(i-1)}(a)$$ a pre-cycle and $$f^{(i)}(a),f^{(i+1)}(a),\D,f^{(j-1)}(a)$$ a cycle of length $(j-i)$ or $(j-i)$-cycle. If $a$ is an element of a cycle it is a {\it periodic element} and if $f(a)=a$ it is a {\it fixed point}.

\begin{Ex}
    Let $q=25$ and $f:\F_{25}\to \F_{25}$ be the function defined by $f(x)=x^6+x^2+1$, then the dynamics of $f$ is represented by the graph
\begin{figure}[H]
	\begin{center}
		\includegraphics[scale=0.3]{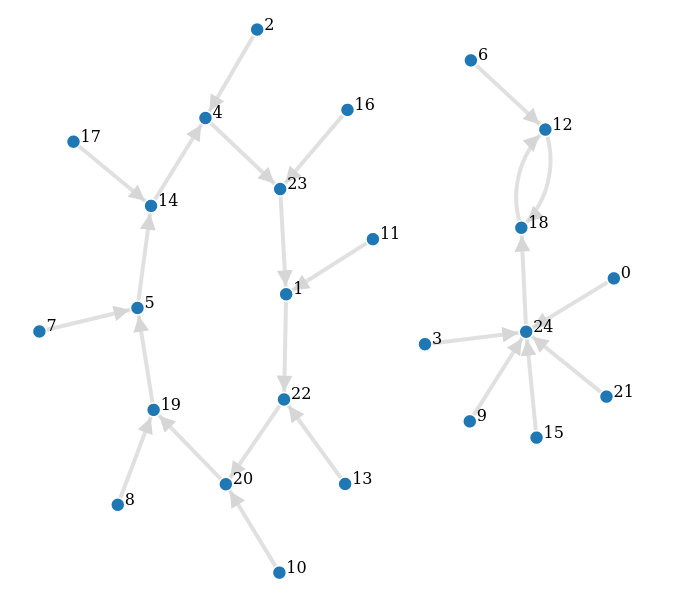}
	\end{center}
	\caption{Functional graph of $f(x)=x^6+x^2+1$ over $\F_{25}$.}
\end{figure}

 where the vertices are labeled by the positive exponent of their representation as a power of a generator of the multiplicative group $\Fq^*$, and the element $0$ is labeled by $0$. 

 We highlight that in this case the vertex $4$ is a periodic point contained in a cycle of length $8$ and the vertex $2$ is in the pre-cycle of the vertex $4$.
\end{Ex}

The main interest in functional graphs is based in their computational applications. For example,  M. Blum, L. Blum and M. Shub \cite{BBS} use the map $f(X)=X^2$ over the finite ring $\Z_n$, where $n$ is the product of two distinct primes, to generate pseudorandom sequences over $\Z_2$. Pseudorandom sequences are sequences of numbers that are computationally difficult to predict. Another application is in cryptography, as seen in \cite{NySo}, where D. Nyang and J. Song use the map $f(X)=X^2$ over the finite ring $\Z_{pq}$ to produce a signature scheme. The safety of both of these application rely on the fact that the product of two large primes cannot be factored in a polynomial time.

The properties of the dynamical system generated by a function, such as number of cycles, cycle lengths and pre-cycle lengths have been studied for several different maps over finite fields and rings. Maps of the form $x^2+c$ have already been thoroughly described as seen in \cite{PeMoMuYu,Rog1,VaSh}. This quadratic function has a highly predictable behavior in the case when $c=0$ or $c=2$, and essentially a random behavior in the other cases \cite{Ei-Eh}. The dynamics of other remarkable functions such as repeated exponentiation \cite{ChSh}, Chebyshev polynomials \cite{Gas,QurPan2} and Rédei functions \cite{QurPan1} have already been described.

In this paper we present the complete characterization of the digraph associated to the map $f(X)=(cX^q+aX)(X^q-X)^{n-1}$, for $a, c\in \Fq$, $n\ge 2$ over $\F_{q^2}$. We determine the number of fixed points, the number of cycles of each length as well as the precise behavior of the pre-cycles. From an algebraic point of view, the map $f$ can be seen as a product of $\Fq$-linear components, which allows the dynamics over $\Fqd$ to be analyzed via its vector space structure over $\Fq$. In particular, when $n=2$  this function defines a quadratic form over $\Fq$, which was studied by the authors in \cite{BrTx} using $c=0$. In addition, it is a natural generalization of the function $x\mapsto a x^2$ over $\Fq$. 

The main contribution of this work is a complete classification of the functional graph of $f$ over $\Fqd$, including all periodic components and the precise structure of the trees attached to cyclic elements. This paper is divided in three sections. Section 2 contains the description of the functional graph in the case when $n$ is even and Section 3 contains the description of the functional graph in the case when $n$ is odd.

\section{The dynamics for $n$ even}

Let $a$ and $c$ be elements in $\Fq$ and $n$ be a positive integer. We define the function $$f(X)=(cX^q+aX)(X^q-X)^{n-1}.$$ We notice that the case $c=a=0$ is trivial, therefore, we assume that at least one of them is not zero. In this section, we analyze the case where $n$ is even.

To understand the behavior of $f(X)$ we look at it in a different perspective. We have that $\Fqd$ may be seen as a vector space over $\Fq$. Thus, we look at $f(X)$ as a map of vector spaces defined over $\Fq\times \Fq$.

Let $\beta\in\Fqd$ be such that $\beta\notin\Fq$ and $\beta^2\in\Fq$. We have that $\{1,\beta\}$ forms  a base of $\Fqd$ over $\Fq$. Moreover, $\beta^q=b^{\frac{q-1}{2}}\beta=-\beta$ and $\beta^{q+1}=-\beta^2$.

 Writing $X\in\Fqd$ as $X=x+y\beta$, with $x,y\in\Fq$, we obtain
\begin{align*}
    f(X)&=(cX^q+aX)(X^{q}-X)^{n-1}\\
    &=(cx-cy\beta+ax+ay\beta)(x-y\beta-x-y\beta)^{n-1}\\
    &=((a+c)x+(a-c)y\beta)(-2y\beta)^{n-1}\\
    &=(a-c)(-2)^{n-1}\beta^ny^n+(a+c)(-2)^{n-1}(\beta)^{n-2}xy^{n-1}\beta
\end{align*}

Since $n$ is even, $\beta^n$ and $\beta^{n-2}$ are elements of $\Fq$, then, defining $\delta_1=(a-c)(-2)^{n-1}\beta^n$ and $\delta_2=(a+c)(-2)^{n-1}(\beta)^{n-2}$, we have

\begin{align}
    \Fq\times\Fq&\overset{f}{\to} \Fq\times\Fq\nonumber\\
    \langle x,y\rangle&\mapsto\langle\delta_1y^n,\delta_2xy^{n-1}\rangle\label{f1}
\end{align}

With this notation, it is verified directly that $\langle0,0\rangle$ is a fixed point. Our first goal is to describe the connected component that contains the point $\langle0,0\rangle$.

\subsection{The zero component}

\begin{Df}
For each element $\alpha\in\Fqd$, we define  $f^{-1}(\alpha)=\{x\in\Fqd\mid f(x)=\alpha\}$. We say it is the preimage of $\alpha$.
\end{Df}

We observe that, if $\langle x,y\rangle\in f^{-1}(0,0)$, then the following equations must be satisfied $$\delta_1y^n=0\ \text{and}\ \delta_2xy^{n-1}=0.$$ The following theorems analyse the cases when $\delta_1=0$ or $\delta_2=0$.

\begin{Th}\label{Theodelta1=0}
    If $\delta_1=0$, then the functional graph of $f$ has one connected component. Moreover, it is composed by a cycle of length one with $2q-2$ vertices directed to it and, amongst those, $q-1$ have $q-1$ elements in its preimage. 
\end{Th}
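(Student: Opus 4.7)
The plan is to leverage the coordinate description \eqref{f1} of $f$ and the hypothesis $\delta_1=0$ to show that $f^{(2)}$ is identically $\langle 0,0\rangle$, and then count preimages by solving simple equations in $\Fq$.

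First I would unwind what $\delta_1 = 0$ means. Since $\beta \neq 0$ and $p$ is odd, the factor $(-2)^{n-1}\beta^n$ is nonzero, so $\delta_1=0$ forces $a=c$. Combined with the standing assumption that $a,c$ are not both zero, this gives $a=c\neq 0$, and therefore $\delta_2 = (a+c)(-2)^{n-1}\beta^{n-2} = 2a(-2)^{n-1}\beta^{n-2} \neq 0$. Plugging $\delta_1=0$ into \eqref{f1} yields
\[
    f(\langle x,y\rangle)=\langle 0,\delta_2 x y^{n-1}\rangle,
\]
so $\mathrm{Im}(f)\subseteq \{0\}\times \Fq$. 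Applying $f$ again to a point of the form $\langle 0,z\rangle$ gives $\langle 0,0\rangle$, because the second coordinate $\delta_2\cdot 0\cdot z^{n-1}$ vanishes. Hence every vertex reaches $\langle 0,0\rangle$ after at most two iterations, so $\G(f)$ has exactly one connected component and the unique cycle is the fixed point $\langle 0,0\rangle$.

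Next I would count direct preimages of $\langle 0,0\rangle$. The equation $\delta_2 x y^{n-1}=0$ (with $\delta_2\neq 0$) is equivalent to $x=0$ or $y=0$, which defines the union of the two coordinate axes in $\Fq\times \Fq$: a set of size $2q-1$. Removing $\langle 0,0\rangle$ itself leaves $2q-2$ vertices feeding directly into the fixed point.

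Finally I would split these $2q-2$ vertices into the $q-1$ points $\langle 0,y_0\rangle$ with $y_0\neq 0$ and the $q-1$ points $\langle x_0,0\rangle$ with $x_0\neq 0$, and analyze their preimages separately. A preimage $\langle x,y\rangle$ of $\langle x_0,0\rangle$ would need $\delta_1 y^n = x_0$, i.e.\ $0=x_0$, which is impossible; so these $q-1$ vertices are leaves. A preimage $\langle x,y\rangle$ of $\langle 0,y_0\rangle$ requires $\delta_2 xy^{n-1}=y_0\neq 0$, which forces $y\neq 0$ and then determines $x = y_0/(\delta_2 y^{n-1})$ uniquely from any choice of $y\in\Fq^*$; this gives exactly $q-1$ preimages for each such vertex. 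This establishes the statement. A sanity check is that the total vertex count matches: $1+(2q-2)+(q-1)(q-1)=q^2$. The only step requiring any care is the split into the two types of level-1 vertices and the verification that the axis $\{\langle x_0,0\rangle : x_0\neq 0\}$ is unreachable from anywhere in $\Fqd$, which follows directly from the shape of the image of $f$.
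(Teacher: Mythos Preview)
Your proof is correct and follows essentially the same approach as the paper: both derive $\delta_2\neq 0$, identify $f^{-1}(0,0)$ as the union of the two coordinate axes, show that the points $\langle x_0,0\rangle$ with $x_0\neq 0$ are leaves, and compute $\#f^{-1}(0,y_0)=q-1$ by solving $\delta_2 xy^{n-1}=y_0$. Your observation that $f^{(2)}\equiv\langle 0,0\rangle$ gives connectivity directly, whereas the paper deduces it from the final vertex count $(q-1)^2+2q-1=q^2$, but this is a cosmetic difference rather than a different strategy.
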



%

\begin{proof}
    If $\delta_1=0$, then $\delta_2\neq0$ by definition and we conclude that, for every $u\in\Fq$, $\langle u,0\rangle$ and $\langle 0,u\rangle$ are the only elements directed to $\langle0,0\rangle$. Then $$\#f^{-1}(0,0)=2q-1.$$ We notice that $\langle a,0\rangle$ has no preimage for every $a\in\Fq^*$. On the other hand, using  (\ref{f1}) we can conclude that the preimage of $\langle 0,u\rangle$ is composed by all the elements defined by $\langle \frac{u}{\delta_2y^{n-1}},y\rangle$, where $y\in\Fq^*$, and all of them have empty preimage. 
    
    Therefore $\#f^{-1}(0,u)=q-1$ and, since there are $q-1$ choices for $u$, we have $(q-1)(q-1)+2q-1=q^2$ elements in this connected component, thus it is the only component of this functional graph.
\end{proof}

\begin{Ex}
Taking $q=7$ and $f(X)=(3X^7+3X)(X^7-X)$, we have $\delta_1=0$ and the functional graph of $f(X)$ is isomorphic to 

\begin{figure}[H]
	\begin{center}
	\includegraphics[scale=0.4]{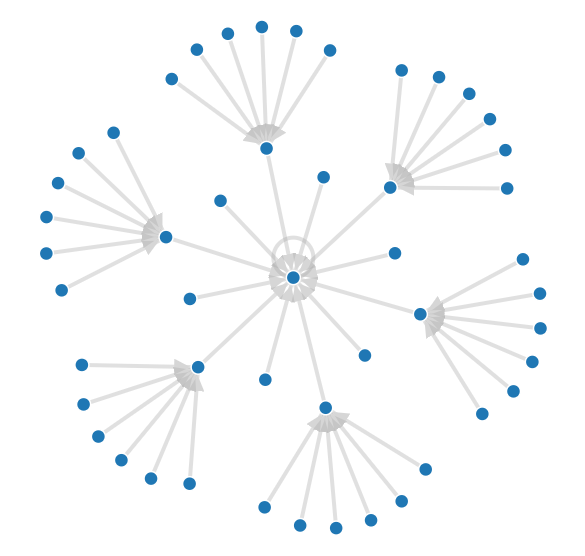}
	\end{center}
	\caption{Functional graph of $f(X)=(3X^7+3X)(X^7-X)$ over $\F_{49}$.}
\end{figure}
\end{Ex}

\begin{Df}
For $m\in\Z$ we define $\mathfrak{g}(m)=\gcd(m,q-1)$.
\end{Df}

\begin{Th}\label{Theodelta2=0}
    If $\delta_2=0$, then the functional graph of $f$ has one connected component. Moreover, it is composed by one cycle of length one with $q-1$ vertices directed to it and, amongst those, $\frac{q-1}{\mathfrak{g(n)}}$ have $q\mathfrak{g}(n)$ vertices in its preimage
\end{Th}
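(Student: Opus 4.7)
The plan is to parallel the structure of the proof of Theorem~\ref{Theodelta1=0}, using the coordinate description (\ref{f1}). First I would observe that $\delta_1=(a-c)(-2)^{n-1}\beta^n$ and $\delta_2=(a+c)(-2)^{n-1}\beta^{n-2}$ cannot vanish simultaneously: since $p$ is odd and $(a,c)\neq (0,0)$, the hypothesis $\delta_2=0$ forces $a=-c\neq 0$ and hence $\delta_1\neq 0$. Substituting $\delta_2=0$ into (\ref{f1}) gives $f(\langle x,y\rangle)=\langle \delta_1 y^n,0\rangle$, so the image of $f$ lies in $\Fq\times\{0\}$; and because $n\geq 2$, a further iteration yields $f(\langle z,0\rangle)=\langle 0,0\rangle$ for every $z\in\Fq$. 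Therefore every vertex reaches $\langle 0,0\rangle$ within two steps, which simultaneously establishes that the graph has a single connected component and that $\langle 0,0\rangle$ is the unique periodic point (a fixed point, i.e. a cycle of length one).

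Next I would count the direct preimages of the fixed point. The equation $\langle \delta_1 y^n,0\rangle=\langle 0,0\rangle$ reduces to $y=0$ with $x$ arbitrary, so $f^{-1}(\langle 0,0\rangle)=\{\langle x,0\rangle : x\in\Fq\}$, giving exactly $q-1$ vertices pointing to the cycle in addition to $\langle 0,0\rangle$ itself.

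The core counting step is the preimage of each $\langle z,0\rangle$ with $z\in\Fq^*$: the system reduces to $y^n=z/\delta_1$ with $x$ free. Since $\Fq^*$ is cyclic of order $q-1$, the $n$-th power map has image of size $\frac{q-1}{\mathfrak{g}(n)}$ and every element in that image has exactly $\mathfrak{g}(n)$ roots. Consequently, precisely $\frac{q-1}{\mathfrak{g}(n)}$ of the values $z\in\Fq^*$ make $z/\delta_1$ an $n$-th power in $\Fq$, and for each such $z$ the preimage $f^{-1}(\langle z,0\rangle)$ has $q\cdot \mathfrak{g}(n)$ elements, while the remaining $\langle z,0\rangle$ have empty preimage. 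A final sanity check $1+(q-1)+\frac{q-1}{\mathfrak{g}(n)}\cdot q\,\mathfrak{g}(n)=q^2$ confirms that this component exhausts $\Fqd$. The only non-routine ingredient is the standard count of $n$-th roots in $\Fq^*$, which is immediate from cyclicity; I do not anticipate a real obstacle.
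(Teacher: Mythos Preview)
Your proof is correct and follows essentially the same approach as the paper: identify $f^{-1}(\langle 0,0\rangle)=\{\langle x,0\rangle : x\in\Fq\}$, determine that $\langle z,0\rangle$ has a preimage precisely when $z/\delta_1$ is an $n$th power, count those preimages as $q\cdot\mathfrak{g}(n)$, and verify the total is $q^2$. Your version is slightly more explicit in deriving connectedness from the observation that every vertex reaches $\langle 0,0\rangle$ in at most two steps, whereas the paper infers this a posteriori from the vertex count; otherwise the arguments coincide.
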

\begin{proof}
    If $\delta_2=0$, then $\delta_1\neq 0$ and the only elements in $f^{-1}(0,0)$ are $\langle u,0\rangle$, for every $u\in\Fq^*$. This implies that $\#f^{-1}(0,0)=q$. We observe that $\langle u,0\rangle$ has nonempty preimage if and only if $\frac{u}{\delta_1}$ is a $n$th power in $\Fq$. Furthermore, the elements in its preimage are of the form $\langle v, \lambda\rangle$, where $\lambda^n=\frac{u}{\delta_1}$ and $v$ is any element of $\Fq$. Since there are $q\cdot \mathfrak{g}(n)$ elements of such form and $\frac{q-1}{\mathfrak{g}(n)}$ elements that are $n$th powers in $\Fq$, we have $q\cdot \mathfrak{g}(n)\cdot \frac{q-1}{\mathfrak{g}(n)}+q=q^2$ elements in this connected components, thus it is the only connected component in this functional graph.
\end{proof}

\begin{Ex}
Now, taking $q=7$ and $f(X)=(3X^7+4X)(X^7-X)$, we have $\delta_2=0$ and the following graph
\begin{figure}[H]
	\begin{center}
		\includegraphics[scale=0.4]{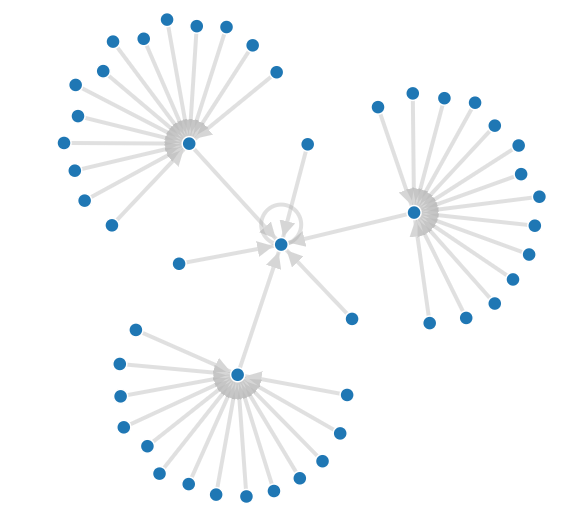}
	\end{center}
	\caption{Functional graph of $f(X)=(3X^7+4X)(X^7-X)$ over $\F_{49}$.}
\end{figure}
\end{Ex}

Since this describes the entire functional graph when $\delta_1=0$ or $\delta_2=0$, for the remainder of this section we  assume $\delta_1\delta_2\neq 0$. Hence, $f^{-1}(0,0)=\{\langle u,0\rangle\mid u\in\Fq\}$ and $\#f^{-1}(0,0)=q$. 
By $(\ref{f1})$,  $\langle u,0\rangle$ has nonempty preimage if and only if $\frac{u}{\delta_1}$ is a $n$th power in $\Fq$. In addition, the elements in its preimage are of the form $\langle 0,\lambda\rangle$, where $\lambda^n=\frac{u}{\delta_1}$, and they have no preimage. We point out that for every element $\langle 0,v\rangle \in\beta\Fq$, there is an element $\langle u,0\rangle\in\Fq$ such that $v^n=\frac{u}{\delta_1}$ and $\langle 0,v\rangle $ is directed to $\langle u,0\rangle$.

In conclusion, we have $\frac{q-1}{\mathfrak{g}(n)}$ elements that are $n$th powers in $\Fq$, with $\mathfrak{g}(n)$ elements in their preimage with no preimage, in a total of $q+\frac{q-1}{\mathfrak{g}(n)}\cdot \mathfrak{g}(n)=2q-1$ elements. Furthermore, this component contains all the elements of the set $\Fq\cup\beta\Fq$. This proves the following.

\begin{Th}
    The connected component that contains zero in the functional graph of $f$ is composed by a cycle of length one, with $q-1$ vertices directed to it and, among those, $\frac{q-1}{\mathfrak{g}(n)}$ have $\mathfrak{g}(n)$ elements in their preimage.
\end{Th}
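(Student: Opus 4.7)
The plan is to trace the preimage tree rooted at $\langle 0,0\rangle$ one level at a time, using the explicit form of $f$ given in $(\ref{f1})$ and the assumption $\delta_1\delta_2\neq 0$. First I would describe $f^{-1}(\langle 0,0\rangle)$: the equations $\delta_1 y^n=0$ and $\delta_2 xy^{n-1}=0$ force $y=0$, so $f^{-1}(\langle 0,0\rangle)=\{\langle u,0\rangle : u\in\Fq\}$. Removing the fixed point itself gives exactly $q-1$ vertices directly mapping into $\langle 0,0\rangle$, which accounts for the first part of the statement.

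Next, for each $u\in\Fq^*$, I would compute $f^{-1}(\langle u,0\rangle)$. The system $\delta_1 y^n=u$ and $\delta_2 xy^{n-1}=0$ forces $y\neq 0$ (otherwise the first equation fails), hence $x=0$, reducing the problem to $y^n=u/\delta_1$. This equation has a solution in $\Fq^*$ iff $u/\delta_1$ lies in the image of the $n$th power map on $\Fq^*$, a subgroup of index $\mathfrak{g}(n)=\gcd(n,q-1)$; equivalently, the fibre is either empty or of size $\mathfrak{g}(n)$. Since there are $\frac{q-1}{\mathfrak{g}(n)}$ $n$th powers in $\Fq^*$, and multiplication by $\delta_1$ is a bijection on $\Fq^*$, exactly $\frac{q-1}{\mathfrak{g}(n)}$ of the vertices $\langle u,0\rangle$ have nonempty preimage, each of cardinality $\mathfrak{g}(n)$.

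To finish, I would verify that the tree does not grow any further: a vertex $\langle 0,\lambda\rangle$ with $\lambda\neq 0$ admits no preimage, since $f(\langle x'',y''\rangle)=\langle 0,\lambda\rangle$ would require $\delta_1(y'')^n=0$, whence $y''=0$ and then $\delta_2 x''(y'')^{n-1}=0\neq\lambda$. A final count $1+(q-1)+\mathfrak{g}(n)\cdot\frac{q-1}{\mathfrak{g}(n)}=2q-1$ confirms that the connected component of $\langle 0,0\rangle$ consists exactly of the vertices identified. The main obstacle is essentially bookkeeping, since the substantive work is already contained in the paragraph preceding the statement; care is only needed to check that $x=0$ is forced at the second level and that the leaves $\langle 0,\lambda\rangle$ are terminal.
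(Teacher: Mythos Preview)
Your proposal is correct and follows essentially the same route as the paper: the authors also compute $f^{-1}(0,0)=\{\langle u,0\rangle:u\in\Fq\}$, observe that $\langle u,0\rangle$ has a nonempty preimage (of size $\mathfrak{g}(n)$, consisting of points $\langle 0,\lambda\rangle$) precisely when $u/\delta_1$ is an $n$th power, note that those leaves have no preimage, and tally $2q-1$ vertices. Your write-up is in fact a bit more explicit than the paper's paragraph in justifying why $x=0$ is forced at the second level and why the $\langle 0,\lambda\rangle$ are terminal.
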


\begin{Ex}
Let $q=13$, $n=4$ and $f(X)=(7X^{13}+3X)(X^{13}-X)^{3}$. Then the connected component that contains $0$ is isomorphic to
\begin{figure}[H]
	\begin{center}
		\includegraphics[scale=0.4]{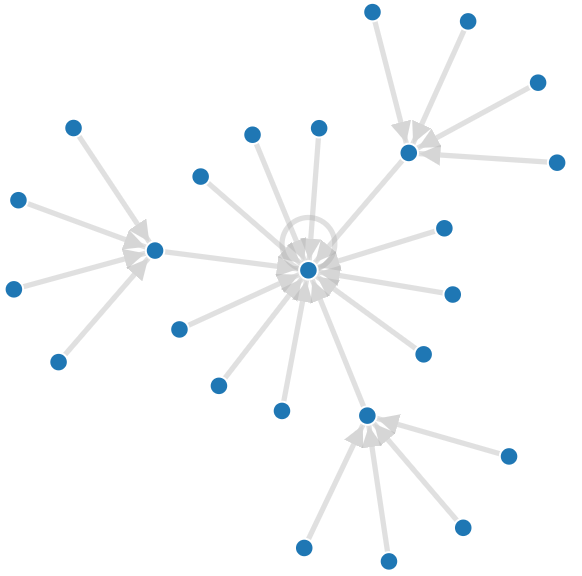}
	\end{center}
	\caption{Connected component that contains zero in the functional graph of $f(X)=(7X^{13}+3X)(X^{13}-X)^{3}$ over $\F_{169}$.}
\end{figure}
 
We notice that there are $2\cdot 13-1=25$ elements in this component.
\end{Ex}


\subsection{The cycles}

\begin{Df}

Given a function $f$ and a positive number $r$, we define $\mathcal{N}_f(r)$ to be the number of $r$-cycles in the functional graph of the map $f$. Since in this paper we analyse a fixed function, to simplify our notation we represent $\mathcal{N}_f(r)$ simply as $\mathcal{N}(r)$.
\end{Df}

Since $\langle0,0\rangle$ is always a fixed point, we know that $\mathcal{N}(1)\geq 1$, but it is not true that $\mathcal{N}(1)= 1$ for every choice of coefficients. Other fixed points may also appear in this functional graph as we see in the next example.

\begin{Ex}
Taking $q=13$, $f(X)=(10X^{13}+2X)(X^{13}-X)^{3}$ and $\beta\in\Fqd\setminus\Fq$ such that $\beta^2=11$, we obtain six copies of the following component that contains a fixed point 
\begin{figure}[H]
	\begin{center}
		\includegraphics[scale=0.4]{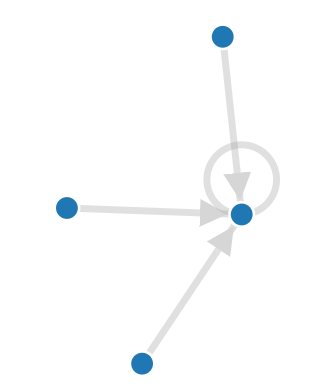}
	\end{center}
	\caption{Connected component that contains a fixed element in the functional graph of $f(X)=(10X^{13}+2X)(X^{13}-X)^{3}$ over $\F_{169}$.}
\end{figure}

\end{Ex}

Those fixed points different from $\langle0,0\rangle$ are counted along with the cycles of greater length. We also describe the pre-cycle of all the connected points further ahead.

\begin{Df}
For every $m\in\Z$ and $a\in\Fq$, we define the function $\chi_m(a)$ as $$\chi_m(a)=\begin{cases}0,&\ \text{if $a=0$}\\ 1,&\ \text{if $a$ is a $m$th power in $\Fq^*$}\\ -1,&\ \text{if $a$ is not a $m$th power in $\Fq^*$}. \end{cases}$$
In the case when $m=2$, $\chi_2$ is known as the quadratic character. 
\end{Df}

\begin{Th}\label{OddCycle}
There are odd cycles different from the $\langle0,0\rangle$ cycle in the functional graph of $f$ if and only if $\chi_2(a^2-c^2)=-1$. 

If $\chi_2(a^2-c^2)=-1$ and $\gamma$ is such that $\gamma^2=\frac{(a-c)b}{(a+c)}$, then, for $j\geq0$, the number of cycles of length $2j+1$, that are different from the cycle of $\langle0,0\rangle$, is $$\mathcal{N}(2j+1)=\frac{\Delta_o}{\mathfrak{g}(n-1) (2j+1)}\cdot\sum_{i\mid (2j+1)}\mu\left(\frac{2j+1}{i}\right)\mathfrak{g}(n^i-1),$$ where 
 
 $$\Delta_o:=\begin{cases}0&\text{if $\chi_{\mathfrak{g}(n-1)}(\gamma\delta_2)=\chi_{\mathfrak{g}(n-1)}(-\gamma\delta_2)=-1$,}\\ 
 2& \text{if $\chi_{\mathfrak{g}(n-1)}(\gamma\delta_2)=\chi_{\mathfrak{g}(n-1)}(-\gamma\delta_2)=1$,}\\
 1& \text{otherwise.}
 \end{cases}
 $$

\end{Th}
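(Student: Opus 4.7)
The plan is to first locate the odd cycles geometrically and then reduce the count to a one-variable problem on $\Fq^*$. For a point $\langle x,y\rangle$ with $y \neq 0$, set $z = x/y$; a direct computation from (\ref{f1}) shows that the induced map on the ratio is the involution $\phi(z) = \delta_1/(\delta_2 z)$. Any periodic point of $f$ of odd period must therefore satisfy $\phi(z) = z$, i.e.\ $z^2 = \delta_1/\delta_2 = (a-c)b/(a+c)$. Because $b = \beta^2$ is a non-square in $\Fq$, this equation has solutions $z = \pm\gamma \in \Fq$ if and only if $(a-c)(a+c)b$ is a square, equivalently $\chi_2(a^2-c^2) = -1$; this proves the first assertion. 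When the condition holds, the two diagonals $L_\pm = \{\langle \pm\gamma y,y\rangle : y \in \Fq^*\}$ are $f$-invariant and contain every periodic point of odd period other than $\langle 0,0\rangle$.

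Using the identity $\delta_1 = \gamma^2\delta_2$, the parametrization $y \leftrightarrow \langle\pm\gamma y, y\rangle$ transports $f|_{L_\pm}$ to the one-variable map $g_\pm(y) = \mu_\pm y^n$ on $\Fq^*$, where $\mu_\pm = \pm\gamma\delta_2$. Writing $H = (\Fq^*)^{n-1}$ so that $[\Fq^*:H] = \mathfrak{g}(n-1)$, I distinguish two cases. When $\mu_\pm \in H$ --- equivalently $\chi_{\mathfrak{g}(n-1)}(\mu_\pm) = 1$ --- choose $\nu$ with $\nu^{n-1} = \mu_\pm$; the substitution $w = \nu y$ conjugates $g_\pm$ to the pure power map $h(w) = w^n$. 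The periodic points of $h$ of period dividing $k$ are exactly the $(n^k-1)$-th roots of unity in $\Fq^*$, a group of size $\mathfrak{g}(n^k-1)$, so Möbius inversion extracts $\frac{1}{k}\sum_{i\mid k}\mu(k/i)\mathfrak{g}(n^i-1)$ cycles of length $k$. When $\mu_\pm \notin H$, telescoping around a hypothetical cycle $y_0 \mapsto \cdots \mapsto y_{k-1} \mapsto y_0$ of $g_\pm$ gives $\mu_\pm^k \bigl(\prod_i y_i\bigr)^{n-1} = 1$, forcing $\mu_\pm^k \in H$; a finer analysis of the order of $\mu_\pm H$ in $\Fq^*/H$ shows that for odd $k$ no such cycle can close up, and that line contributes nothing.

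The final step is to sum the contributions from $L_+$ and $L_-$: the number of contributing lines is exactly $\Delta_o \in \{0,1,2\}$ as defined in the statement, which collapses the two-line count into the stated closed form. The main technical obstacle is the vanishing claim in the case $\mu_\pm \notin H$: although $\mu_\pm^k \in H$ is automatic as soon as $k$ is a multiple of $\ord(\mu_\pm H)$, one must exclude \emph{genuine} odd-length cycles and not merely periodic points of larger period. This is where the hypothesis that $n$ is even interacts with the order structure of $\Fq^*/H$, and the careful parity bookkeeping --- together with the factor $\mathfrak{g}(n-1)$ in the denominator of the formula --- is the subtle heart of the argument; once it is in place, the remaining work is direct manipulation of the Möbius sum.
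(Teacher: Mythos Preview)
Your reduction via the ratio $z=x/y$ and the observation that the induced map $\phi(z)=\delta_1/(\delta_2 z)$ is an involution is a cleaner route to the constraint $x=\pm\gamma y$ than the paper's, which computes $f^{(2j+1)}$ explicitly and compares coordinates; the conjugation of $g_\pm(y)=\mu_\pm y^n$ to $w\mapsto w^n$ when $\mu_\pm\in H=(\Fq^*)^{n-1}$ is likewise correct and makes the M\"obius count on that line transparent. But the vanishing claim you defer as the ``subtle heart'' is a genuine gap, and in fact it is \emph{false}. Take $q=7$, $n=4$, $a=2$, $c=1$, $\beta^2=3$: then $\delta_1=\delta_2=5$, $\gamma=\pm1$, and $\mu_\pm\in\{2,5\}$. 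Since $\mathfrak{g}(n-1)=3$ and the cubes in $\F_7^*$ are $\{1,6\}$, neither $\mu_\pm$ lies in $H$, so $\Delta_o=0$ and the formula predicts no nonzero odd cycles. Yet $g_+(y)=5y^4$ has the $3$-cycle $5\to3\to6\to5$ and $g_-(y)=2y^4$ has $1\to2\to4\to1$; both are honest $3$-cycles of $f$ on $\F_{49}$. Your telescoping argument only yields $\mu_\pm^k\in H$, and since $[\Fq^*:H]=\mathfrak{g}(n-1)$ is \emph{odd} (because $n$ is even and $q$ is odd), the order of $\mu_\pm H$ in $\Fq^*/H$ is itself odd --- so parity gives you nothing here, and odd $k$ are not excluded. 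The paper's proof contains the same unjustified step: it asserts that the equation in $y$ is solvable iff $\gamma\delta_2$ is a $\mathfrak{g}(n-1)$-th power, independently of $j$.

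There is a second, independent discrepancy. Even granting your vanishing claim, your per-line count of $k$-cycles is $\tfrac{1}{k}\sum_{i\mid k}\mu(k/i)\,\mathfrak{g}(n^i-1)$, so summing over $\Delta_o$ lines produces the displayed formula \emph{without} the factor $\mathfrak{g}(n-1)$ in the denominator. You flag this but do not derive it; the paper inserts the same factor at the M\"obius-inversion step, likewise without explanation.
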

\begin{proof}
    We have proved that if $X=x+y\beta\in\Fqd$ and $x=0$ or $y=0$, then $X$ is in the connected component of $0$. 
    Therefore, suppose $xy\neq 0$. Recalling $(\ref{f1})$, we have that $f(x,y)=\langle\delta_1y^n,\delta_2xy^{n-1}\rangle.$ Applying $f$ again, it follows that
    \begin{align}
    f^{(2)}(x,y)&=\langle \delta_1(\delta_2xy^{n-1})^n,\delta_2\delta_1y^n(\delta_2xy^{n-1})^{n-1}\rangle \nonumber\\
    &=\delta_1\delta_2^nx^{n-1}y^{n^2-n}\langle x,y\rangle .\label{f2v}
\end{align}
    Writing $g(x,y)=\delta_1\delta_2^nx^{n-1}y^{n^2-n}$, for all $v\in\Fq$, we get \begin{align*}
        f^{(2)}(vx,vy)&=g(vx,vy)\langle vx,vy\rangle \nonumber\\
        &=v^{n^2}g(x,y)\langle x,y\rangle .
    \end{align*}
Furthermore, taking $v=g(x,y)$, we have\begin{align*}
        f^{(4)}(x,y)&=f^{(2)}(g(x,y)\langle x,y\rangle )\\&=g(x,y)^{n^2+1}\langle x,y\rangle . \end{align*}
Therefore, by induction, we conclude that \begin{align}
    f^{(2j)}(x,y)&=g(x,y)^{1+n^2+n^4+\D+n^{2j-2}}\langle x,y\rangle \nonumber\\
    &=g(x,y)^{\frac{n^{2j}-1}{n^2-1}}\langle x,y\rangle \label{f2nv}.
\end{align}
Now, applying $f$ once more  \begin{align}
    f^{(2j+1)}(x,y)&=f(g(x,y)^{\frac{n^{2j}-1}{n^2-1}}\langle x,y\rangle )\nonumber\\
    &=g(x,y)^{\frac{n^{2j+1}-n}{n^2-1}}y^{n-1}\langle \delta_1y,\delta_2x\rangle.
\end{align}
If there is a cycle of odd length, then we must have \begin{equation}
    g(x,y)^{\frac{n^{2j+1}-n}{n^2-1}}y^{n-1}\langle \delta_1y,\delta_2x\rangle =\langle x,y\rangle,\label{oddfix}
\end{equation} for some $\langle x,y\rangle \in(\Fq^*)^2$, which implies that  $\frac{\delta_1y}{x}=\frac{\delta_2x}{y}$ and $\frac{\delta_1}{\delta_2}=(\frac{x}{y})^2$.

Since $\frac{\delta_1}{\delta_2}=\frac{(a-c)b}{a+c}$, in order to obtain a solution, we must have $\chi_2(a^2-c^2)=-1$. Let $\gamma\in\Fq$ be such that $\gamma^2=\frac{\delta_1}{\delta_2}$. Then $x=\gamma y$, and, applying this relation in the first coordinate of $(\ref{oddfix})$, we conclude that 
\begin{align*}
    g(x,y)^{\frac{n^{2j+1}-n}{n^2-1}}y^{n-1}\delta_1y&=\gamma y\\
    (\delta_1\delta_2^nx^{n-1}y^{n^2-n})^{\frac{n^{2j+1}-n}{n^2-1}}y^{n-1}\gamma\delta_2&=1\\
    ((\gamma^2\delta_2)\delta_2^{n}(\gamma y)^{n-1}y^{n^2-n})^{\frac{n^{2j+1}-n}{n^2-1}}y^{n-1}\gamma\delta_2&=1,\\
\end{align*}
therefore
$$y^{n^{2j+1}-1}=((\gamma\delta_2)^{-1})^{\frac{n^{2j+1}-1}{n-1}}.$$
There exists $y$ that satisfies such equation if and only if $\gamma\delta_2$ is a $\mathfrak{g}(n-1)$th power in $\Fq$. In that case, for each $j$ we have $\mathfrak{g}(n^{2j+1}-1)$ solutions. Furthermore, $-\gamma$ also satisfies $(-\gamma)^2=\frac{\delta_1}{\delta_2}$, then if $-\gamma\delta_2$ is a $\mathfrak{g}(n-1)$th power in $\Fq$, we have $\mathfrak{g}(n^{2j+1}-1)$ more solutions to $(\ref{oddfix})$. Defining $\Delta_o$ as in the statement of the theorem, we get a total of $\Delta_o\cdot\mathfrak{g}(n^{2j+1}-1)$ solutions.

We notice that if $\langle x,y\rangle$ is an element that satisfies $$g(x,y)^{\frac{n^{i}-n}{n^2-1}}y^{n-1}\langle \delta_1y,\delta_2x\rangle =\langle x,y\rangle$$ for some $i$ that divides $2j+1$, then it also satisfies (\ref{oddfix}). Thus, they appear among the $\Delta_o\cdot\mathfrak{g}(n^{2j+1}-1)$ solutions, although they are not elements of a $(2j+1)$-cycle. Therefore, $$\Delta_o\cdot\frac{\mathfrak{g}(n^{2j+1}-1)}{\mathfrak{g}(n-1)}=\sum_{i\mid 2j+1}i\mathcal{N}(i).$$
Hence, by the M\"obius inversion formula, we conclude that $$(2j+1)\mathcal{N}(2j+1)=\frac{\Delta_o}{\mathfrak{g}(n-1)}\cdot\sum_{i\mid (2j+1)}\mu\left(\frac{2j+1}{i}\right)\mathfrak{g}(n^i-1)$$ and this concludes our proof.

\end{proof}

The following result is a direct consequence of this theorem.

\begin{Co}
The number of fixed points is always $\Delta_o+1$.
\end{Co}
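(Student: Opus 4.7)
The plan is to derive the corollary by specializing Theorem \ref{OddCycle} to the case $j = 0$ and accounting for the fixed point at the origin, which was excluded from the theorem's count. Since every fixed point is a cycle of length $1$, and $1$ is odd, the formula for $\mathcal{N}(2j+1)$ with $j = 0$ counts exactly the fixed points different from $\langle 0, 0\rangle$.

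Setting $2j+1 = 1$ in the expression from Theorem \ref{OddCycle}, the only divisor of $1$ is $i = 1$, and $\mu(1) = 1$, so the sum collapses to the single term $\mathfrak{g}(n-1)$. This gives
\[
\mathcal{N}(1) \;=\; \frac{\Delta_o}{\mathfrak{g}(n-1) \cdot 1} \cdot \mathfrak{g}(n-1) \;=\; \Delta_o.
\]
Finally, I would add $1$ to account for the fixed point $\langle 0, 0\rangle$, which by construction is always present but is excluded from the count in Theorem \ref{OddCycle} (whose statement only counts odd cycles different from the $\langle 0, 0\rangle$ cycle). Therefore the total number of fixed points is $\Delta_o + 1$.

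There is no real obstacle here; the result is an immediate specialization of the previous theorem. The only thing to check carefully is the bookkeeping: that the formula really does count fixed points other than the origin (which it does, since the derivation of the formula assumed $xy \neq 0$), and that the $\mathfrak{g}(n-1)$ in the numerator coming from $\mathfrak{g}(n^1 - 1)$ cancels the one in the denominator, leaving exactly $\Delta_o$.
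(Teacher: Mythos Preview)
Your proof is correct and is exactly the argument the paper has in mind: it states only that the corollary is ``a direct consequence'' of Theorem~\ref{OddCycle}, and your specialization $j=0$ with the cancellation $\mathfrak{g}(n^1-1)/\mathfrak{g}(n-1)=1$ and the addition of $1$ for the origin spells this out. One small caveat worth noting (which the paper glosses over too) is that the formula in Theorem~\ref{OddCycle} is stated under the hypothesis $\chi_2(a^2-c^2)=-1$; when $\chi_2(a^2-c^2)=1$ the first clause of the theorem says there are no non-origin odd cycles at all, so the fixed-point count is $1$, consistent with reading $\Delta_o=0$ in that case.
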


Now that we have the number of cycles of odd length, we must calculate the number of cycles of even length.

\begin{Th}\label{evenlength}
Let $\alpha\in\Fq^*$ be a generator and $q-1=sr$, where $r$ is the maximum such that $\gcd(r,n)=1$. Let $l_0$ be the smallest nonegative integer such that $\delta_1\delta_2=\alpha^{l_0}$. For every $0\leq j< \frac{r}{\mathfrak{g}(n-1)}$, define  $$d_j=\frac{q-1}{s\gcd(\frac{q-1}{s},t_0+j\mathfrak{g}(n-1))}$$ and $k_j=\ord_{(n^2-1)d_j}(n^2)$, where $t_0$ is the minimal integer that satisfies $1\leq t_0\leq r$ and $t_0\equiv s^{-1}l_0\pmod {\mathfrak g(n-1)}$.  Then, \begin{center}
    $\mathcal{N}(2k_j)=\frac{\varphi(d_j)(q-1)\mathfrak{g}(n-1)-\Delta_e}{2k_j}$,
\end{center} where $$\Delta_e=\begin{cases}0,& \text{if $k_j$ is even}\\ k_j\cdot\mathcal{N}(k_j),& \text{if $k_j$ is odd.}\end{cases}$$ and those are the only cycles of even length.

\end{Th}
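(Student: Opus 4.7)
The plan is to use formula (\ref{f2nv}), which gives $f^{(2k)}(x,y)=g(x,y)^{(n^{2k}-1)/(n^2-1)}\langle x,y\rangle$, to reduce the question of even-length cycles to a multiplicative-order question in $\Fq^*$. If $d=\ord(g(x,y))$, then $f^{(2k)}(x,y)=\langle x,y\rangle$ is equivalent to $d(n^2-1)\mid n^{2k}-1$, so the smallest such $k$ is $\ord_{(n^2-1)d}(n^2)$.

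First I would describe the image of $g$. The identity $g(x,y)=\delta_1\delta_2(\delta_2 xy^n)^{n-1}$ shows that $g\colon(\Fq^*)^2\to\Fq^*$ surjects onto the coset $H=\alpha^{l_0}(\Fq^*)^{n-1}$, with every value in $H$ attained by exactly $(q-1)\mathfrak{g}(n-1)$ pairs. Since every prime dividing $n$ is coprime to $n^{2k}-1$, the order $d$ must be coprime to $n$ for the periodic equation to admit a solution, equivalently $d\mid r$. So only elements of $H$ whose order divides $r$ can lie on an even cycle.

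Next I would parameterize these admissible elements. Since $\gcd(n-1,n)=1$ implies $\gcd(s,n-1)=1$, and $\gcd(s,r)=1$ by the maximality defining $r$, the CRT gives $\Z/(q-1)\Z\cong\Z/s\Z\times\Z/r\Z$. An element $\alpha^t\in H$ has order dividing $r$ iff $s\mid t$; writing $t=st'$, the coset condition becomes $t'\equiv s^{-1}l_0\pmod{\mathfrak{g}(n-1)}$ (using that $\mathfrak{g}(n-1)\mid r$), so the admissible $t'$ are exactly $t_0+j\mathfrak{g}(n-1)$ with $0\le j<r/\mathfrak{g}(n-1)$. The order of the corresponding element of $H$ is $d_j=r/\gcd(r,t_0+j\mathfrak{g}(n-1))$, and its minimal even period is $2k_j=2\,\ord_{(n^2-1)d_j}(n^2)$, recovering the definitions in the statement.

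To count cycles of length $2k_j$, I would group the admissible elements of $H$ by their order. Since $\Fq^*$ contains $\varphi(d_j)$ elements of order $d_j$ and each sits in $H$ by the coset-subgroup compatibility above, the total number of pairs $\langle x,y\rangle\in(\Fq^*)^2$ with $\ord(g(x,y))=d_j$ is $\varphi(d_j)(q-1)\mathfrak{g}(n-1)$. Each such pair has minimal even period $2k_j$; the actual minimal period equals $2k_j$ except when some odd divisor of $2k_j$ is already a period. By Theorem \ref{OddCycle} this happens precisely when $k_j$ is itself odd and matches an odd cycle length, in which case exactly the $k_j\mathcal{N}(k_j)$ elements of the odd $k_j$-cycles must be removed from the count. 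Dividing by $2k_j$ yields the stated formula.

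The main obstacle is verifying that $\varphi(d_j)$ accurately counts the elements of $H$ of order $d_j$; this requires a careful CRT argument showing that the coset $H=\alpha^{l_0}(\Fq^*)^{n-1}$ meets each order-$d_j$ subgroup in exactly the expected number of elements, leveraging $\gcd(s,n-1)=1$ and $\mathfrak{g}(n-1)\mid r$. A secondary subtlety is ruling out even cycle lengths strictly smaller than $2k_j$, which follows from the minimality in the definition of $k_j$ but must be stated explicitly to justify that the only overcounting comes from odd cycles, captured by $\Delta_e$.
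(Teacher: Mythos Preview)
Your proposal is correct and follows essentially the same route as the paper: both reduce the even-period condition to $\ord(g(x,y))\,(n^2-1)\mid n^{2k}-1$, parameterize the admissible values of $g$ via the congruence $t\equiv s^{-1}l_0\pmod{\mathfrak g(n-1)}$, count $\varphi(d_j)(q-1)\mathfrak g(n-1)$ pairs of each order, and subtract the odd-cycle contribution when $k_j$ is odd. Your factorization $g(x,y)=\delta_1\delta_2(\delta_2 xy^n)^{n-1}$ is a slightly cleaner way to see the coset $H=\alpha^{l_0}(\Fq^*)^{n-1}$ than the paper's direct exponent manipulation, and the two ``obstacles'' you flag (that all $\varphi(d_j)$ elements of order $d_j$ lie in $H$, and that no smaller even period occurs) are exactly the points the paper handles, with the same level of detail.
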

\begin{proof}
    Suppose $X=\langle x,y\rangle $ is an element of a cycle of length $2k>0$. This implies that $k$  is the minimal such that
    \begin{equation}
        (\delta_1\delta_2^nx^{n-1}y^{n^2-n})^{\frac{n^{2k}-1}{n^2-1}}=1.\label{eveneq}
    \end{equation}    
   If $\delta_1\delta_2^nx^{n-1}y^{n^2-n}=\theta\in\Fq^*$, then  $\frac{n^{2k}-1}{n^2-1}\equiv0\pmod{\ord(\theta)}$, which is equivalent to $n^{2k}\equiv1\pmod{(n^{2}-1)\ord(\theta)}$. However, there exists such $k$ if and only if $\gcd(\ord(\theta),n)=1$. Then, for every $2k$-cycle, there is a $d$, divisor of $q-1$ with $\gcd(d,n)=1$, such that $k=\ord_{(n^{2}-1)d}(n^2)$.
    
  
    Now, let $\theta$ be an element of order $d$, with $\gcd(d,n)=1$. Taking $\alpha$ a generator of the multiplicative group $\Fq^*$, we have $\theta=\alpha^i$ for some $i$ and 
    $$d=\ord(\alpha^{i})=\frac{q-1}{\mathfrak{g}(i)}.$$
    Since $\gcd(d,n)=1$, $s$ must divide $i$, hence $i=st$ for some $1\leq t\leq s$. 
    
    The equation $\delta_1\delta_2^nx^{n-1}y^{n^2-n}=\alpha^{st}$ is only solvable when $t$ is such that $\frac{\alpha^{st}}{\delta_1\delta_2}$ is a $\mathfrak{g}(n-1)$th power in $\Fq$. In other words, writing $\delta_1\delta_2=\alpha^{l_0}$ and $\frac{\alpha^{st}}{\delta_1\delta_2}=\alpha^{st-l_0}$, since $\gcd(s,n-1)=1$, we must have $$t\equiv s^{-1}l_0\pmod{\mathfrak{g}(n-1)}.$$ If $t_0$ is the minimum integer that satisfies $1\leq t_0\leq r$ and $t_0\equiv s^{-1}l_0\pmod{\mathfrak{g}(n-1)}$, then $t=t_0+j\mathfrak{g}(n-1)$, for some $0\leq j\leq\frac{r-t_0}{\mathfrak{g}(n-1)}$. Thus $$d=d_j=\frac{q-1}{s\gcd(\frac {q-1}s, t_0+j\mathfrak{g}(n-1))}$$
    For each  $0\leq j\leq\frac{r-t_0}{\mathfrak{g}(n-1)}$, any element $x+y\beta\in\Fqd$ with $\delta_1\delta_2^nx^{n-1}y^{n^2-n}=\theta$ and $\ord(\theta)=d_j$ satisfies (\ref{eveneq}) taking $k=k_j:=\ord_{(n^2-1)d_j}(n^2)$. We highlight that this does not imply necessarily that $x+y\beta$ is an element of a $2k_j$-cycle, as it could be an element of a $m$-cycle, where $m$ divides $2k_j$. Suppose that is the case. If $m$ is even, then $m=2m'$ and $$(\delta_1\delta_2^nx^{n-1}y^{n^2-n})^{\frac{n^{2m'}-1}{n^2-1}}=1.$$ However, this means that $k_j=\ord_{(n^2-1)d_j}(n^2)$ divides $m'$ and $k_j=m'$. Similarly, if $m$ is odd, then $m$ divides $k_j$ and  $$(\delta_1\delta_2^nx^{n-1}y^{n^2-n})^{\frac{n^{2m}-1}{n^2-1}}=1.$$ Since $k_j=\ord_{(n^2-1)d_j}(n^2)$, then $k_j$ divides $m$, which imply that $k_j=m$ and $k_j$ is odd. 
    
    To summarize, taking $x+y\beta$ such that $\delta_1\delta_2^nx^{n-1}y^{n^2-n}=\theta$ and $\ord(\theta)=d_j$, if $k_j$ is even then $x+y\beta$ is an element of a $2k_j$-cycle, if $k_j$ is odd then $x+y\beta$ is an element of either a $2k_j$-cycle or a $k_j$-cycle.

    Fixing $d_j$, in order to have $\ord(\alpha^{t})=d_j$, we must choose $t$ such that $\mathfrak{g}(t)=\frac{q-1}{d_j}$, which is equivalent to taking $t=\frac{q-1}{d_j}t'$ 
    with $\gcd(t',d_j)=1$. Since $1\leq t\leq q-1$, then $1\leq t'\leq d_j$. Hence there are $\varphi(d_j)$ choices for $t$.
    
    Furthermore, for each $t$, there are $(q-1)\mathfrak{g}(n-1)$ ordered pairs $\langle x,y\rangle $ such that $\delta_1\delta_2^nx^{n-1}y^{n^2-n}=\alpha^{t}$. Consequently, if $k_j$ is even, then the number of elements in cycles of length $2k_j$ is $$\varphi(d_j)(q-1)\mathfrak{g}(n-1).$$ If $k_j$ is odd, then we must subtract the elements that are in $k_j$-cycles, hence, there are \begin{center}
        $\varphi(d_j)(q-1)\mathfrak{g}(n-1)-k_j\,\mathcal{N}(k_j),$
    \end{center} where $\mathcal{N}(k_j)$ is given by the formula presented in Theorem \ref{OddCycle}.
    
    Since there are $2k_j$ elements in each cycle, there are $\frac{\varphi(d_j)(q-1)\mathfrak{g}(n-1)-\Delta_e}{2k_j}$ cycles, where $$\Delta_e=\begin{cases}0,& \text{if $k_j$ is even}\\ k_j\,\mathcal{N}(k_j),& \text{if $k_j$ is odd.}\end{cases}$$
\end{proof}

\subsection{The trees}
With the number and length of the cycles, we now study the shape of the pre-cycles. Each pre-cycle is composed by non-periodic elements that are directed to a single periodic element after a number of iterations of $f$, therefore, forming tree like structures hanging from the cyclic element.
\begin{Df}
    For any integer $t$, we denote by $\zeta_t$ a (not necessarily primitive) $t$th root of unity in $\Fq$, that is, $(\zeta_t)^t=1$ in $\Fq$.
\end{Df}
\begin{Le}\label{gamma}
 If $a\in\Fqd^*$ is such that $a=\langle a_1,a_2\rangle$, then $\#f^{-1}(a_1,a_2)\in \{0,\mathfrak{g}(n)\}$.
\end{Le}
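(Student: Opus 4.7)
The plan is to unpack what $f^{-1}(a_1,a_2)$ means in coordinates using the formula $(\ref{f1})$, and then do a short case analysis on whether the first coordinate $a_1$ vanishes. Since we are in the regime $\delta_1\delta_2\neq 0$ established just before the subsection, both scalars are invertible, which keeps the case analysis clean.

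Concretely, a preimage of $(a_1,a_2)$ is a pair $\langle x,y\rangle\in\Fq\times\Fq$ satisfying the system
\begin{equation*}
    \delta_1 y^n = a_1,\qquad \delta_2 x y^{n-1}=a_2.
\end{equation*}
First I would dispose of the case $a_1=0$: the first equation forces $y=0$ (since $\delta_1\neq 0$), and then the second equation forces $a_2=0$, which contradicts $(a_1,a_2)\neq(0,0)$; so $\#f^{-1}(a_1,a_2)=0$. Next, in the case $a_1\neq 0$, the first equation becomes $y^n = a_1/\delta_1 \in\Fq^*$, which has either $0$ solutions or exactly $\mathfrak{g}(n)=\gcd(n,q-1)$ solutions in $\Fq^*$ according to whether $a_1/\delta_1$ is an $n$th power in $\Fq^*$ (this is the standard count for the map $y\mapsto y^n$ on the cyclic group $\Fq^*$). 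For each such $y$, since $y\neq 0$ and $\delta_2\neq 0$, the second equation determines $x=a_2/(\delta_2 y^{n-1})$ uniquely. Hence the total number of preimages is either $0$ or $\mathfrak{g}(n)$, as claimed.

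There is no real obstacle here; the only point requiring a moment of attention is that one cannot simply divide the two equations to solve for $x/y$, because $a_2$ may be zero while $a_1$ is not. Splitting on $a_1=0$ versus $a_1\neq 0$ bypasses this cleanly, and the standing assumption $\delta_1\delta_2\neq 0$ guarantees that no auxiliary subcases appear.
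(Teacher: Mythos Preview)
Your argument is correct and reaches the same conclusion as the paper, but the framing differs slightly. The paper does not case-split on $a_1$; instead it assumes $f^{-1}(a_1,a_2)$ is nonempty, exhibits the diagonal action of the $n$th roots of unity $\zeta_n\cdot\langle x,y\rangle=\langle \zeta_n x,\zeta_n y\rangle$ on the fibre, and shows this action is both free and transitive, so the fibre has exactly $\mathfrak{g}(n)$ elements. Your approach solves the system outright: first for $y$ (either $0$ or $\mathfrak{g}(n)$ choices), then uniquely for $x$. Your version is marginally more explicit about \emph{when} the fibre is empty (namely whenever $a_1=0$, or $a_1/\delta_1$ is not an $n$th power), while the paper's version makes the structure of the fibre as a $\mu_{\mathfrak{g}(n)}$-torsor explicit; that structural description is exactly what is reused in the proof of Theorem~\ref{precyc} to parametrize the levels of the hanging trees by roots of unity. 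Either argument is a one-paragraph proof, and yours is fully valid.
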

\begin{proof}
    If $f^{-1}(a_1,a_2)$ is non-empty and $\langle x,y\rangle\in f^{-1}(a_1,a_2)$, then it satisfies $\delta_1y^n=a_1$ and $\delta_2xy^{n-1}=a_2$. Let $\zeta_{n}\in\Fq$ be a non trivial $n$th root of unity, then $\langle x\zeta_{n},y\zeta_{n}\rangle $ is also a solution to $\delta_1y^n=a_1$ and $\delta_2xy^{n-1}=a_2$. Since there are $\mathfrak{g}(n)$ roots, we have at least $\mathfrak{g}(n)$ elements in $f^{-1}(a_1,a_2)$.
    
    If $\langle x',y'\rangle$ is an element in $f^{-1}(a_1,a_2)$, then $\delta_1y^n=\delta_1(y')^n$ and $\delta_2xy^{n-1}=\delta_2x'y'^{n-1}$, which implies that $y^n=(y')^n$ and $(\frac{y}{y'})^n=1$. Thus, they must differ by a $n$th root of unity, as well as $x$ and $x'$. Therefore, there are exactly $\mathfrak{g}(n)$ solutions.
\end{proof}

With this proof we obtain a relation between the $\mathfrak{g}(n)$th roots of unity and the preimage of an element. In the next theorem, we show that, in fact, there is a relation between those roots and every element in the tree hanging from a cyclic element. 

\begin{Df}
 Given a tree $\T$, we define the distance between two vertices as the number of edges contained in the path that connects them. If $P$ is a vertex in $\T$, then $P$ is in the $i$th level of $\T$ if its distance to the root of $\T$ is $i$.
\end{Df}






\begin{Th}\label{precyc}
    Let $q-1=sr$, where $r$ is the greatest integer such that $gcd(r,n)=1$, and $e$ be the largest integer such that $\mathfrak{g}(n^e)<s$. Then, any nonzero periodic element is the root of a tree with height $e+1$. 
\end{Th}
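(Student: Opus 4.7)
The plan is to parametrize the $k$-th preimages of a non-zero periodic element $\tilde P_0 = \langle x_0, y_0\rangle$ (so $x_0 y_0 \neq 0$, since otherwise $\tilde P_0$ would lie in the zero component analyzed earlier in this section) by roots of unity in $\Fq^*$, and then read off the tree height from their group structure.

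The key observation is the identity $f(\xi x,\xi y) = \xi^n\, f(x,y)$ for any $\xi \in \Fq^*$, which by induction gives $f^{(k)}(\xi x,\xi y) = \xi^{n^k}\, f^{(k)}(x,y)$ for every $k \geq 1$. Let $\tilde P_{-k}$ denote the $k$-th predecessor of $\tilde P_0$ along its cycle. Applying the identity at $\tilde P_{-k}$ yields $f^{(k)}(\xi \tilde P_{-k}) = \xi^{n^k}\tilde P_0$, so $\xi \tilde P_{-k} \in f^{-k}(\tilde P_0)$ iff $\xi \in \mu_{n^k} := \{\zeta \in \Fq^* : \zeta^{n^k} = 1\}$. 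A short computation with the iterate formulas derived in the proofs of Theorems \ref{OddCycle} and \ref{evenlength} (showing that any preimage has coordinate ratio matching that of $\tilde P_{-k}$) establishes the reverse inclusion, so $f^{-k}(\tilde P_0) = \{\xi \tilde P_{-k} : \xi \in \mu_{n^k}\}$ and $|f^{-k}(\tilde P_0)| = \mathfrak{g}(n^k)$.

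Next I would compute the level of each preimage in the tree. Since $f^{(j)}(\xi \tilde P_{-k}) = \xi^{n^j}\tilde P_{-k+j}$, this iterate is periodic precisely when $\xi^{n^j}\tilde P_{-k+j}$ belongs to the cycle of $\tilde P_0$. An argument using that cycle invariants have order coprime to $n$ (a consequence of $\gcd(n^{2m}-1,n)=1$, where $2m$ is the cycle length) shows such a coincidence forces $\xi^{n^j} = 1$, hence $\xi \in \mu_{n^j}$. Therefore $\xi \tilde P_{-k}$ first enters the cycle at step $k$ iff $\xi \in \mu_{n^k} \setminus \mu_{n^{k-1}}$, and the number of level-$k$ tree elements is exactly $\mathfrak{g}(n^k) - \mathfrak{g}(n^{k-1})$.

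Finally, since $\gcd(n, r) = 1$ we have $\mathfrak{g}(n^k) = \gcd(n^k, s)$, a non-decreasing sequence of divisors of $s$. A $p$-adic valuation check using the definition of $e$ shows this sequence strictly increases through index $e + 1$ and is constant equal to $s$ thereafter, so the tree has non-empty level $k$ exactly for $1 \leq k \leq e + 1$, giving height $e + 1$. The main obstacle is the structural lemma ruling out premature cyclic hits at intermediate steps $j < k$: it hinges on the scalar-matching constraint between points of the cycle with equal $y/x$-ratio, combined with the coprimality of $n$ and the order of the cycle-invariant scalar.
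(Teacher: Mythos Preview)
Your proposal is correct and follows essentially the same route as the paper: both exploit the homogeneity $f(\xi x,\xi y)=\xi^n f(x,y)$ to identify the non-periodic preimages of a cyclic point with $n^k$-th roots of unity that are not $n^{k-1}$-th roots, and then read off the height from the chain $\mathfrak g(n)\mid\mathfrak g(n^2)\mid\cdots\mid s$. The only organizational difference is that the paper builds the bijection level by level via induction (using Lemma~\ref{gamma} at each step), whereas you parametrize the full preimage set $f^{-k}(\tilde P_0)=\{\xi\tilde P_{-k}:\xi\in\mu_{n^k}\}$ in one stroke and then argue separately which of these are genuinely at level~$k$; this forces you to prove the ``no premature cyclic hit'' claim $\eta Q$ periodic $\Rightarrow\eta=1$ for $\eta\in\mu_{n^\infty}$, which the paper's inductive framing sidesteps. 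That claim is indeed valid (take $L$ a common multiple of the periods of $Q$ and $\eta Q$, so $\eta^{n^L-1}=1$ with $\gcd(n^L-1,\operatorname{ord}\eta)=1$), so your argument goes through.
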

\begin{proof}
    Let $q-1=sr$, where $r$ is the greatest integer such that $gcd(r,n)=1$. Consequently, we have that $\mathfrak{g}(n)$ divides $s$. 

    Let $\langle x_0,y_0\rangle\neq \langle0,0\rangle$ be a periodic element and, for all $i\geq 1$, let $\langle x_i,y_i\rangle$ be a periodic point such that $$f(x_i,y_i)=\langle x_{i-1},y_{i-1}\rangle.$$ 
    Since $n$ is even and greater than $2$, we have that there are nontrivial $n$th roots of unity. From Lemma \ref{gamma}, we get that any nonperiodical element is in $f^{-1}(x_{0},y_{0})$ if and only if it is of the form $\langle \zeta_{n} x_1,\zeta_{n} y_1\rangle $ for some nontrivial $n$th root of unity $\zeta_{n}$. 
   
   Suppose that $\langle x,y\rangle $ is an element in $f^{-1}(\zeta_{n} x_1, \zeta_{n} y_1)$. This means that $\delta_1y^n=\zeta_{n} x_1$ and $\delta_2xy^{n-1}=\zeta_{n} y_1$. Moreover, we have that $\delta_1y_2^n=x_1$ and $\delta_2x_2y_2^{n-1}=y_1$. Then, combining the four equations we obtain that $\delta_1y^n=\zeta_{n}\delta_1y_2^n$ and $\delta_2xy^{n-1}=\zeta_{n}\delta_2x_2y_2^{n-1}$, which implies that $y^n=\zeta_{n} y_2^n$ and $x=\frac{x_2}{y_2}y$. Therefore, $\langle \zeta_{n} x_1,\zeta_{n} y_1\rangle $ has nonempty preimage if and only if $\zeta_{n}$ is a $n$th power in $\Fq$, that is, if there are $n^2$th roots of unity in $\Fq$ that are not $n$th roots of unity.
   
   If $s=\mathfrak{g}(n)$, then, for any $k\geq 1$, the number of $n^k$th roots of unity is $\mathfrak{g}(n^k)=s.$ Therefore, every $n^k$th root of unity is also a $n$th root of unity. Hence $\zeta_{n}$ is not a $n$th power and  $\langle \zeta_{n} x_1,\zeta_{n} y_1\rangle $ has no preimage. Thus, every cyclic element has $\mathfrak{g}(n)-1$ nonperiodic elements directed to it, each with no preimage.
   

    Now, suppose that $s>\mathfrak{g}(n)$. Moreover, let $e$ be the greatest integer such that $\mathfrak{g}(n^e)<s$. As an induction hypothesis suppose that if $1\leq k<e$, then $\langle x,y\rangle$ is an element in the $k$th level of the tree with root $\langle x_0,y_0\rangle$ if and only if $\langle x,y\rangle=\langle \zeta_{n^k}x_{k},\zeta_{n^k}y_{k}\rangle$, where $\zeta_{n^k}\in\Fq$ is a $n^k$th root of unity that is not a $n^{k-1}$th root of unity.

    Let $\zeta_{n^{k+1}}$ be a $n^{k+1}$th root of unity that is not a $n^k$th root of unity. We have that $$f(\zeta_{n^{k+1}}x_{k+1},\zeta_{n^{k+1}}y_{k+1})=\langle\zeta_{n^{k+1}}^nx_{k},\zeta_{n^{k+1}}^ny_{k}\rangle.$$ Since $(\zeta_{n^{k+1}}^n)^{n^k}=1$ and $(\zeta_{n^{k+1}}^n)^{n^{k-1}}=\zeta_{n^{k+1}}^{n^k}\neq 1$, by the induction hypothesis, it follows that $\langle\zeta_{n^{k+1}}^nx_{k},\zeta_{n^{k+1}}^ny_{k}\rangle$ is an element in the $k$th level of the tree with root $\langle x_0,y_0\rangle$. Hence, $\langle \zeta_{n^{k+1}}x_{k+1},\zeta_{n^{k+1}}y_{k+1}\rangle$ is an element in the $(k+1)$th level. 

    Conversely, suppose that $$\langle x,y\rangle \in f^{-1}(\zeta_{n^{k}}x_{k},\zeta_{n^{k}}y_{k}).$$  Then $\delta_1y^n=\zeta_{n^{k}}x_{k}$ and, since $\delta_1y_{k+1}^n=x_{k}$ by definition, we have that $\delta_1y^n=\zeta_{n^{k}}\delta_1y_{k+1}^n$. It follows that $y^n=\zeta_{n^{k}}y_{k+1}^n$ and $y=\theta y_{k+1}$, where $\theta^n=\zeta_{n^{k}}$. Furthermore, we have that $\theta^{n^k}=\zeta_{n^{k}}^{n^{k-1}}\neq 1$ and $\theta^{n^{k+1}}=\zeta_{n^{k}}^{n^k}=1$. Hence, $\theta$ is a $n^{k+1}$th root of unity and not a $n^k$th root of unity. Therefore, if $\langle x,y\rangle$ is an element in the $k$th level of the tree with root $\langle x_0,y_0\rangle$, then $\langle x,y\rangle=\langle \zeta_{n^k}x_{k},\zeta_{n^k}y_{k}\rangle$, where $\zeta_{n^k}\in\Fq$ is a $n^k$th root of unity that is not a $n^{k-1}$th root of unity.

    Finally, for any $j> 1$ we have that $\mathfrak{g}(n^{e+j})=\mathfrak{g}(n^{e+1})=s$. Hence, any $n^{e+j}$th roots of unity are also $e^{e+1}$ roots of unity, and there are no elements in the $(e+j)$th level.


\end{proof}

We have that the elements in the $i$th level of the hanging tree are in a bijection with the $n^i$th roots of unity in $\Fq$ that are not $n^{(i-1)}$th roots of unity. The number of such roots of unity is $\mathfrak{g}(n^i)-\mathfrak{g}(n^{i-1})$. Hence, for each $i>0$, there are $\mathfrak{g}(n^i)-\mathfrak{g}(n^{i-1})$ elements in the level $i$ of the hanging tree of each nonzero cyclic element.

In general, there are multiple nonisomorphic trees with the same amount of vertices in each level. Therefore, we use the following to determine precisely the shape of the hanging trees of the functional graph of $f$.

\begin{Df}
    Let $n$ be a fixed integer with $\mathfrak{g}(n)>1$, and write $q-1=sr$, where $r$ is the greatest integer such that $\gcd(r,n)=1$. We define the tree $\T_n(1)$ as a single vertex with $\mathfrak{g}(n)-1$ vertices directed to it.
    
    Let $e$ be the greatest integer such that $\mathfrak{g}(n^e)<s$. For every $1\leq i\leq e$ we define the tree $\T_n(i+1)$ in the following way: First we label the elements in the $i$th level of $\T_n(i)$ from left to right in ascending order skipping the numbers that are multiples of $\frac{\mathfrak{g}(n^i)}{\mathfrak{g}(n^{i-1})}$. Then we attach $\mathfrak{g}(n)$ new vertices to each vertex labeled with a multiple of $\frac{\mathfrak{g}(n^{i})\mathfrak{g}(n)}{\mathfrak{g}(n^{i+1})}$.
\end{Df}

\begin{Th}\label{tree}
    Let $n$ be a fixed integer and $q-1=rs$, where $r$ is the greatest integer such that $\gcd(n,r)=1$. If $e$ is the largest integer such that $\mathfrak{g}(n^e)<s$, then each nonzero cyclic element is the root of a tree isomorphic to $\T_n(e+1)$ 
\end{Th}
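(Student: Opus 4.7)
The plan is to identify the vertices of the actual tree with the labels used in the construction of $\T_n(e+1)$ via discrete logarithms in $\Fq^*$, and then to verify that the combinatorial rules defining $\T_n(e+1)$ reproduce the functional-graph dynamics under this identification.

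By Theorem \ref{precyc}, each level-$k$ vertex of the actual tree (for $0\le k\le e+1$) is of the form $\langle\zeta x_k,\zeta y_k\rangle$ for a unique $\zeta\in U_k\setminus U_{k-1}$, where $U_k:=\{\eta\in\Fq^*:\eta^{n^k}=1\}$ has size $m_k:=\mathfrak{g}(n^k)$. We fix a generator $\alpha$ of $\Fq^*$ and set $\omega_k:=\alpha^{(q-1)/m_k}$, so that each such $\zeta$ is uniquely $\omega_k^j$ for some $j\in\{1,\dots,m_k\}$ with $m_k/m_{k-1}\nmid j$. Labeling this vertex by $j$ produces exactly the label set used at level $k$ in the definition of $\T_n(e+1)$.

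We then check the three pieces of data defining $\T_n(e+1)$: (i) the image under $f$ of the level-$k$ vertex labeled $j$ is the level-$(k-1)$ vertex labeled $jnm_{k-1}/m_k\pmod{m_{k-1}}$, since $(\omega_k^j)^n=\omega_{k-1}^{jnm_{k-1}/m_k}$ (where $nm_{k-1}/m_k$ is an integer because $m_k/m_{k-1}\mid n$); (ii) by the preimage analysis in the proof of Theorem \ref{precyc}, the level-$k$ vertex labeled $j$ has a non-empty preimage iff $\omega_k^j\in(\Fq^*)^n$, iff $m_1\mid j(q-1)/m_k$, iff $(m_1m_k/m_{k+1})\mid j$, which is exactly the selection rule in the construction; (iii) Lemma \ref{gamma} then supplies exactly $m_1$ children, and under the labeling these are precisely the $m_1$ level-$(k+1)$ labels $j'$ satisfying $j'nm_k/m_{k+1}\equiv j\pmod{m_k}$, matching the freshly attached vertices in $\T_n(e+1)$.

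The main technical obstacle is the arithmetic identity $\gcd(m_1,(q-1)/m_k)=m_{k+1}/m_k$, which is what collapses the raw divisibility $m_1\mid j(q-1)/m_k$ into the clean form $(m_1m_k/m_{k+1})\mid j$ and also controls the $m_1$-to-one preimage count. It reduces to a comparison of $p$-adic valuations using $\nu_p(m_k)=\min(k\nu_p(n),\nu_p(q-1))$, with a case split on whether $\nu_p(q-1)$ is at least $(k+1)\nu_p(n)$, strictly between $k\nu_p(n)$ and $(k+1)\nu_p(n)$, or smaller than $k\nu_p(n)$. Once this identity is in place, the remaining verification is bookkeeping showing that both trees carry the same labels, attachment rule, and parent-child relations at every level, so they are isomorphic as rooted trees.
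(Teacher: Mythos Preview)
Your approach is essentially the paper's: label the level-$k$ vertices by the discrete logarithm $j$ of the associated root of unity $\omega_k^{\,j}$ and then verify that such a vertex has children precisely when $\dfrac{\mathfrak{g}(n)\,\mathfrak{g}(n^k)}{\mathfrak{g}(n^{k+1})}\mid j$, which is exactly the attachment rule in the construction of $\T_n$. You are more explicit than the paper about the parent map and about the identity $\gcd\!\bigl(m_1,(q-1)/m_k\bigr)=m_{k+1}/m_k$ (the paper hides this step behind the change of generator $\alpha\mapsto\alpha^{\,n/\mathfrak{g}(n)}$), but the arguments coincide.
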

\begin{proof}
    For any $1\leq i\leq e+1$, in the proof of Theorem \ref{precyc} we show that the elements in the $i$th level of the tree hanging from any cyclic element in the functional graph of $f$ correspond to the $n^i$th roots of unity that are not $n^{i-1}$th roots of unity in $\Fq$. Hence, in this proof we represent the elements of the tree by their corresponding roots of unity. Let $\zeta_{n^i}$ be one of such roots. We may assume $s>\mathfrak{g}(n)$ and $i>1$, since both cases were completely described in the proof of Theorem \ref{precyc}.
    
    Let $\alpha$ be a generator of the multiplicative group $\Fq^*$. We have that $\zeta_{n^i}=\alpha^{\frac{q-1}{\mathfrak{g}(n^i)}t}$, for some $1\leq t<\frac{q-1}{\mathfrak{g}(n^i)}$ and $t$ is not a multiple of $\frac{\mathfrak{g}(n^{i})}{\mathfrak{g}(n^{i-1})}$. Since $\alpha^{\frac{q-1}{\mathfrak{g}(n^i)}t}$ is an element in the $i$th level, there is an element $\zeta_{n^{i-1}}$ in the $(i-1)$th level such that $f(\zeta_{n^{i-1}})=\alpha^{\frac{q-1}{\mathfrak{g}(n^i)}t}$. 
    We want to find necessary and sufficient conditions for $\zeta_{n^{i-1}}$, in order for it to have a preimage.

    Writing $t=t_1+t_2\frac{\mathfrak{g}(n^i)}{\mathfrak{g}(n)}$, where $1\leq t_1<\frac{\mathfrak{g}(n^i)}{\mathfrak{g}(n)}$ and $0\leq t_2<\mathfrak{g}(n)$, we have that $$\zeta_{n^{i-1}}=\alpha^{\frac{q-1}{\mathfrak{g}(n^i)}(t_1+t_2\frac{\mathfrak{g}(n^i)}{\mathfrak{g}(n)})}n=(\alpha^{\frac{n}{\mathfrak{g}(n)}})^{\frac{q-1}{\mathfrak{g}(n^i)}(t_1\mathfrak{g}(n))}.$$ As $\mathfrak{g}(\frac{n}{\mathfrak{g}(n)})=1$, we have that $\alpha^{\frac{n}{\mathfrak{g}(n)}}=\alpha'$ is also a generator of $\Fq^*$. Moreover, $\zeta_{n^{i-1}}$ is a $n^{i-1}$th root of unity that is not a $n^{i-2}$th root of unity in $\Fq$. Hence, writing it as a power of $\alpha'$, we have $\zeta_{n^{i-1}}=(\alpha')^{\frac{q-1}{\mathfrak{g}(n^{i-1})}m}$, for some $1\leq m<\frac{q-1}{\mathfrak{g}(n^{i-1})}$ such that $m$ is not a multiple of $\frac{\mathfrak{g}(n^{i-1})}{\mathfrak{g}(n^{i-2})}$. Thus, $$(\alpha')^{\frac{q-1}{\mathfrak{g}(n^{i-1})}m}=(\alpha')^{\frac{q-1}{\mathfrak{g}(n^i)}(t_1\mathfrak{g}(n))}$$ and $(\alpha')^{\frac{q-1}{\mathfrak{g}(n^{i-1})}m}$ has a preimage if and only if $m$ is a multiple of $\frac{\mathfrak{g}(n^{i-1})\mathfrak{g}(n)}{\mathfrak{g}(n^{i})}$.

\end{proof}

\begin{Ex}
Let $q=13$, $n=2$ and $f(X)=(X^{13}+3X)(X^{13}-X)$. From the beginning of this section, we have seen that the connected component that contains zero is isomorphic to 
\begin{figure}[H]
	\begin{center}
		\includegraphics[scale=0.4]{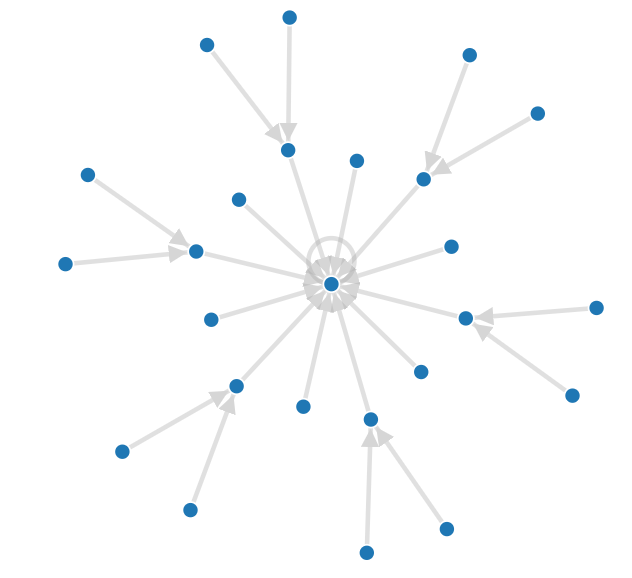}
	\end{center}
	\caption{Connected component that contains zero in the functional graph of $f(X)=(X^{13}+3X)(X^{13}-X)$ over $\F_{169}$.}
\end{figure}

We observe that $\mathfrak{g}(n)=2$ and $\mathfrak{g}(n^2)=4=s$, then $e=1$ and every nonzero cyclic element has a tree isomorphic to $\T_2(2)$ attached to it. Let us now find the odd cycles. Let $\beta\in\Fqd$ be such that $\beta^2=2$. Since $\chi_2(8)=-1$, there are odd cycles in this functional graph. Taking $\gamma=5$, we have $\gamma^2=\frac{(a-c)b}{(a+c)}$ and $\Delta_o=2$. Hence, $\mathcal{N}(1)=2$ and $\mathcal{N}(2j+1)=0$ for every $j>0$. Then there are $2$ copies of the following connected component.
\begin{figure}[H]
	\begin{center}
		\includegraphics[scale=0.4]{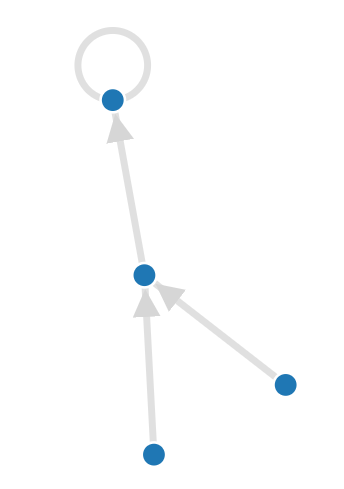}
	\end{center}
	\caption{Connected component containing cycle of length $1$ in the functional graph of $f(X)=(X^{13}+3X)(X^{13}-X)$ over $\F_{169}$.}
\end{figure}

For the even cycles, take $\alpha=2$. We have that $s=4$ and $r=3$. From simple calculations we obtain $\delta_1\delta_2=-1$. From Theorem $\ref{evenlength}$ we have $l_0=6$, $t_0=1$ and the possible values for $j$ are $\{0,1,2\}$. Thus, $k_0=k_1=3$ and $k_2=1$, which implies that $$\mathcal{N}(6)=\frac{2\cdot 12}{2\cdot 3}=4$$ and $$\mathcal{N}(2)=\frac{12-2}{2}=5.$$ Therefore, there are $4$ connected components isomorphic to
\begin{figure}[H]
	\begin{center}
		\includegraphics[scale=0.4]{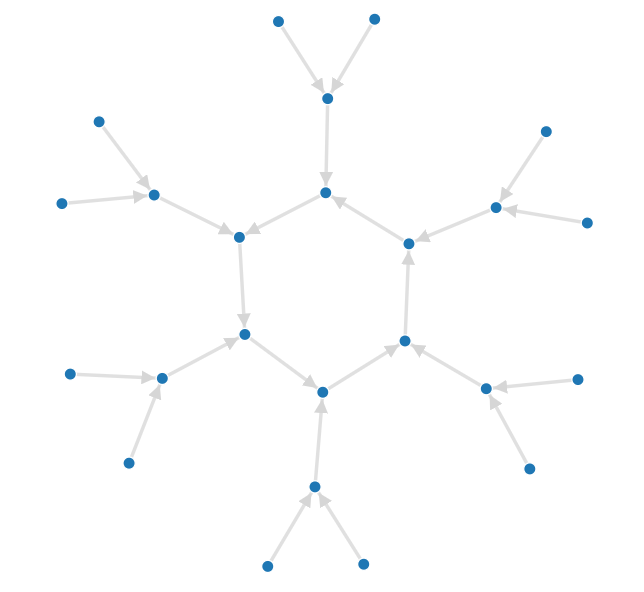}
	\end{center}
	\caption{Connected component containing cycle of length $6$ in the functional graph of $f(X)=(X^{13}+3X)(X^{13}-X)$ over $\F_{169}$.}
\end{figure}
and $5$ connected components isomorphic to 
\begin{figure}[H]
	\begin{center}
		\includegraphics[scale=0.3]{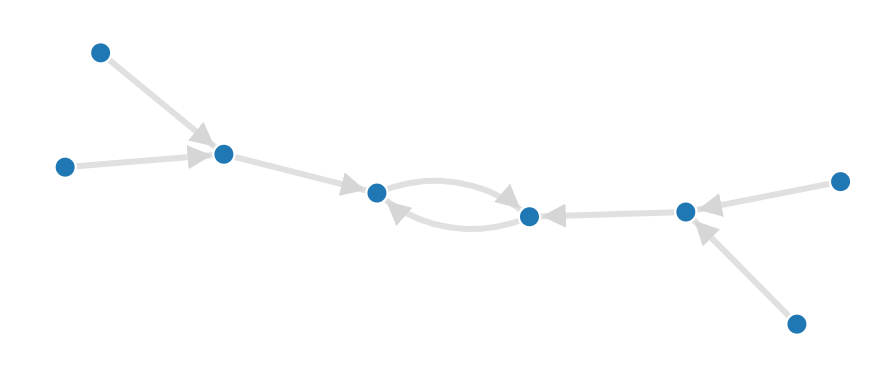}
	\end{center}
	\caption{Connected component containing cycle of length $2$ in the functional graph of $f(X)=(X^{13}+3X)(X^{13}-X)$ over $\F_{169}$.}
\end{figure}
\end{Ex}

\begin{Ex}
Now let $q=13$, $n=6$ and $f(X)=(X^{13}+3X)(X^{13}-X)^{5}$. The connected component that contains zero is isomorphic to 
\begin{figure}[H]
	\begin{center}
		\includegraphics[scale=0.5]{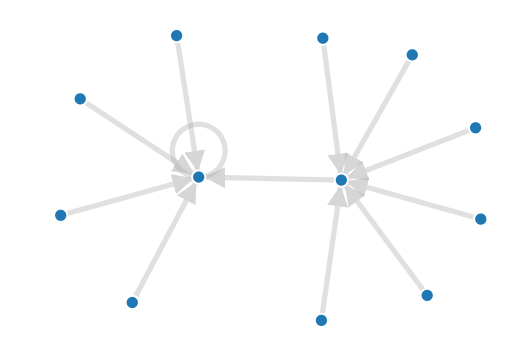}
	\end{center}
	\caption{Connected component containing zero in the functional graph of $f(X)=(X^{13}+3X)(X^{13}-X)^{5}$ over $\F_{169}$.}
\end{figure}

In this case, we have $n=6$, thus $\mathfrak{g}(n)=6$ and $\mathfrak{g}(n^2)=12=s$. Hence every tree is isomorphic to $\T_6(2)$.
Again, $\chi_2(a^2-c^2)=-1$ and taking $\gamma=-1$, we have $\Delta_0=2$ and we conclude that $\mathcal{N}(1)=2,$ and $\mathcal{N}(2j+1)=0$ for every other $j$. Therefore, there are $2$ copies of
\begin{figure}[H]
	\begin{center}
		\includegraphics[scale=0.5]{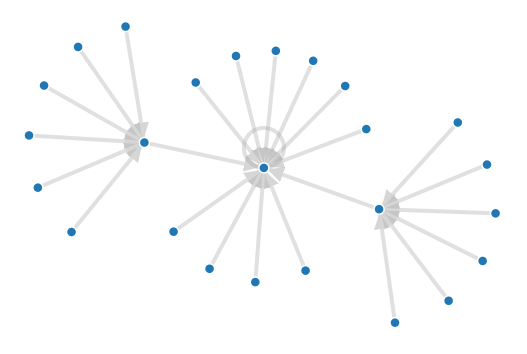}
	\end{center}
	\caption{Connected component containing fixed element in the functional graph of $f(X)=(X^{13}+3X)(X^{13}-X)^{5}$ over $\F_{169}$.}
\end{figure}

For the even cycles, take $\alpha=2$ a generator of $\Fq^*$, $s=12$ and $r=1$. Then $\delta_1\delta_2=-1$ and, from Theorem $\ref{evenlength}$, we have $l_0=6$, $t_0=1$, and the only possible value for $j$ is $0$. Then $k_{0}=1$ and $\mathcal{N}(2)=5$. Hence, there are $5$ copies of 
\begin{figure}[H]
	\begin{center}
		\includegraphics[scale=0.5]{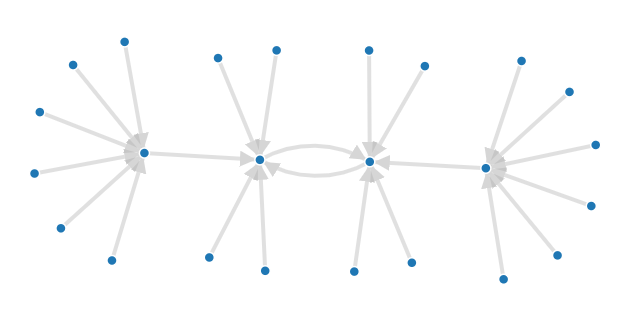}
	\end{center}
	\caption{Connected component containing a cycle of length $2$ in the functional graph of $f(X)=(X^{13}+3X)(X^{13}-X)^{5}$ over $\F_{169}$.}
\end{figure}
and those are the only elements in the functional graph of $f$.
\end{Ex}

\section{The dynamics for $n$ odd}

Let $a$ and $c$ be elements of $\Fq$. Now, we look at the second case, where $n$ is an odd integer. 

Again, we look at $f$ as a map of vector spaces over $\Fq\times \Fq$. Writing $X=x+y\beta$, we have 
\begin{align*}
    f(X)&=(cX^q+aX)(X^{q}-X)^{n-1}\\
    &=(cx-cy\beta+ax+ay\beta)(x-y\beta-x-y\beta)^{n-1}\\
    &=((c+a)x+(a-c)y\beta)(-2y\beta)^{n-1}
\end{align*}
Since $n-1$ is even, defining $\delta_1=(a+c)$ and $\delta_2=(a-c)$, we obtain the following 
\begin{align}
    \Fq\times\Fq&\overset{f}{\to} \Fq\times\Fq\nonumber\\
    \langle x,y\rangle&\mapsto(4\beta^2)^{\frac{n-1}{2}}y^{n-1}\langle\delta_1x,\delta_2y\rangle.\label{f2}
\end{align}

\subsection{The zero component}

Immediately from (\ref{f2}), we get that $\langle0,0\rangle$ is a fixed point. We start by analysing its preimage.

\begin{Th}
    If $\delta_2=0$ then the functional graph of $f$ contains one connected component. Moreover, it is composed by a cycle of length one with $2q-2$ vertices directed to it and, amongst those, $q-1$ have $q-1$ elements in its preimage.
\end{Th}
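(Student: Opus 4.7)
The plan is to mirror the argument of Theorem~\ref{Theodelta1=0} from the even case, adapted to the parametrization in~(\ref{f2}). Since $\delta_2=a-c=0$ forces $a=c$, the standing nontriviality assumption $(a,c)\neq(0,0)$ gives $\delta_1=a+c=2a\neq 0$. Substituting $\delta_2=0$ into~(\ref{f2}) yields $f(x,y)=\langle Kxy^{n-1},0\rangle$ where $K:=(4\beta^2)^{(n-1)/2}\delta_1\in\Fq^*$. The crucial structural feature is that the second coordinate of every image equals zero, and this single observation will rule out most preimages.

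First I would determine $f^{-1}(0,0)$. Since $K\neq 0$, the equation $Kxy^{n-1}=0$ reduces to $x=0$ or $y=0$, giving the $2q-1$ points $\{\langle u,0\rangle:u\in\Fq\}\cup\{\langle 0,v\rangle:v\in\Fq\}$; removing the origin leaves $2q-2$ nontrivial vertices directed to $\langle 0,0\rangle$. Next I would split these into two families. For each $v\in\Fq^*$, the preimage of $\langle 0,v\rangle$ is empty, because every element in the image of $f$ has second coordinate $0\neq v$. For each $u\in\Fq^*$, the preimage of $\langle u,0\rangle$ is parameterized by $y\in\Fq^*$ via $x=u/(Ky^{n-1})$; this produces exactly $q-1$ elements, all with both coordinates nonzero.

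Applying the same image observation once more, any vertex with nonzero second coordinate has empty preimage, so the tree attached to the fixed point has height at most two. Summing the contributions, the connected component of $\langle 0,0\rangle$ has
$$1+(2q-2)+(q-1)(q-1)=q^2=|\Fqd|$$
vertices, hence exhausts the whole functional graph, yielding both uniqueness of the component and the claimed preimage description.

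The main obstacle is essentially bookkeeping of the subcases $x=0$ versus $y=0$; the degeneracy caused by $\delta_2=0$, which collapses the image of $f$ to the $y=0$ axis, does the heavy lifting, and the proof runs parallel to Theorem~\ref{Theodelta1=0} with the roles of the two coordinates effectively swapped.
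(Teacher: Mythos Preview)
Your proof is correct and follows essentially the same approach as the paper's: both compute $f^{-1}(0,0)$ as the union of the two coordinate axes, observe that the $\langle 0,v\rangle$ have empty preimage while each $\langle u,0\rangle$ with $u\neq 0$ has exactly $q-1$ preimages, and then count to $q^2$ to conclude there is a single connected component. Your write-up is slightly more explicit in noting that the second-level preimages have both coordinates nonzero and hence themselves have empty preimage, but this is a cosmetic addition rather than a different argument.
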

\begin{proof}
    If $\delta_2=0$, then $\delta_1\neq 0$ by definition. Moreover, we have that $\langle x,y\rangle\in f^{-1}(0,0)$ implies that $(4\beta^2)^{\frac{n-1}{2}}y^{n-1}\delta_1x=0$, thus $x=0$ or $y=0$. Consequently, $$f^{-1}(0,0)=\langle0,0\rangle\cup\{\langle x,0\rangle\mid x\in\Fq^*\}\cup\{\langle 0,y\rangle\mid y\in\Fq^*\}.$$ For every $y\in\Fq^*$, $\langle 0,y\rangle$ has no preimage. On the other hand, for each $x\in\Fq^*$ there are $q-1$ elements in $f^{-1}(x,0)$. Thus there are $1+2(q-1)+(q-1)^2=q^2$ elements in this component and it is the only component.
\end{proof}

\begin{Ex}
Taking $q=7$ and $f(X)=(2X^{7}+2X)(X^{7}-X)^{2}$, we have $\delta_2=2-2=0$ and the functional graph is isomorphic to  
\begin{figure}[H]
	\begin{center}
		\includegraphics[scale=0.4]{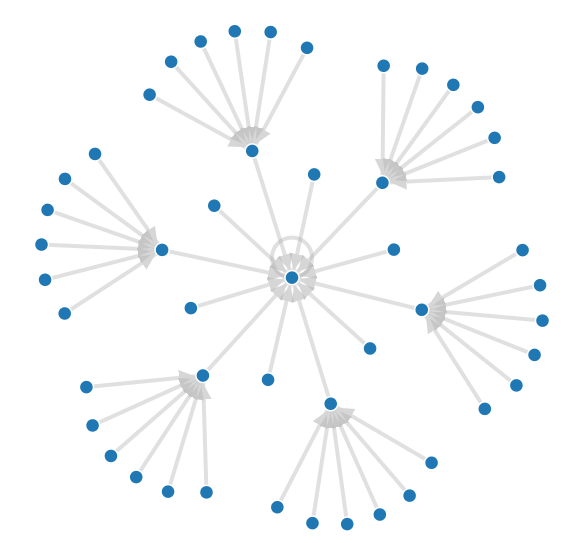}
	\end{center}
	\caption{Functional graph of $f(X)=(2X^{7}+2X)(X^{7}-X)^{2}$ over $\F_{49}$.}
\end{figure}
We point out that it contains all the $7^2$ elements of the field.
\end{Ex}

From now on we may assume $\delta_2\neq0$.
In that case, $f^{-1}(0,0)=\{\langle x,0\rangle\mid x\in\Fq\}$ and, for every $x\in\Fq^*$, $\langle x,0\rangle$ has no preimage. Therefore, the connected component of $\langle0,0\rangle$ contains $q$ elements, all directed to $\langle0,0\rangle$. 

\subsection{The cycles}

After obtaining the description of the component that contains zero, we study the remaining cycles. We start with cyclesof length $1$. 

\begin{Th}
 The number of fixed points of $f$ is $$\mathcal{N}(1)=\mathcal{N}_0(1)+\mathcal{N}_*(1)+1,$$ where $$\mathcal{N}_0(1)=\begin{cases}
    \mathfrak{g}(n-1) &\text{if $\chi_{\frac{n-1}{2}}(\delta_2)=1$ and $\chi_{n-1}(\delta_2)=-1$}\\
    0 & \text{otherwise},
\end{cases}$$ is the number of nonzero fixed points with zero in the first coordinate
 and $$\mathcal{N}_*(1)=\begin{cases}
    (q-1)\mathfrak{g}(n-1) &\text{if $\chi_{\frac{n-1}{2}}(\delta_2)=1$, $\chi_{n-1}(\delta_2)=-1$ and $\delta_1=\delta_2$}\\
    0 & \text{otherwise},
    \end{cases}$$ is the number of fixed points with the first coordinate different than zero.
\end{Th}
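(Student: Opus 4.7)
Using (\ref{f2}), set $\mu := (4\beta^2)^{(n-1)/2} \in \Fq^*$, so that the fixed-point equation $f(x,y) = \langle x,y\rangle$ becomes the system
$$\mu \delta_1 x y^{n-1} = x, \qquad \mu \delta_2 y^n = y.$$
I will classify the fixed points according to which of $x,y$ vanishes. The pair $\langle 0,0\rangle$ is always a fixed point, accounting for the $+1$. A fixed point with $x \neq 0$ and $y = 0$ forces the first equation to read $0 = x$, a contradiction. A fixed point with $x = 0$ and $y \neq 0$ reduces to $y^{n-1} = (\mu\delta_2)^{-1}$ in $\Fq^*$, which has exactly $\mathfrak{g}(n-1)$ solutions when $\mu\delta_2$ is an $(n-1)$th power in $\Fq^*$ and none otherwise; this gives $\mathcal{N}_0(1) \in \{0,\mathfrak{g}(n-1)\}$. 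A fixed point with $xy \neq 0$ satisfies, after dividing by $x$ and $y$ respectively, $\mu\delta_1 y^{n-1} = 1 = \mu\delta_2 y^{n-1}$, which forces $\delta_1 = \delta_2$ as a necessary condition and collapses the system to the same equation on $y$, with $x \in \Fq^*$ free; this yields $\mathcal{N}_*(1) \in \{0,(q-1)\mathfrak{g}(n-1)\}$, vanishing unless $\delta_1 = \delta_2$.

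Both nontrivial counts thus hinge on a single question: when is $\mu\delta_2$ an $(n-1)$th power in $\Fq^*$? For the character translation, I will use that $\mu = (2\beta)^{n-1}$ and that $\beta^2 \in \Fq^*$ is a non-square (since $\beta \notin \Fq$), yielding the identity
$$(4\beta^2)^{(q-1)/2} = \chi_2(4\beta^2) = -1.$$
Raising $\mu\delta_2$ to the power $(q-1)/\mathfrak{g}(n-1)$ and applying this identity reduces the question to whether $\delta_2$ lies in a specific coset. The key algebraic input is that $(\Fq^*)^{n-1}$ has index at most $2$ inside $(\Fq^*)^{(n-1)/2}$, that $\mu$ lies in $(\Fq^*)^{(n-1)/2}$ automatically (as $\mu = (4\beta^2)^{(n-1)/2}$), and that $\mu$ represents the nontrivial coset of $(\Fq^*)^{n-1}$ in $(\Fq^*)^{(n-1)/2}$; this forces $\delta_2$ to lie in the same nontrivial coset in order for $\mu\delta_2$ to be an $(n-1)$th power, which is exactly the condition $\chi_{(n-1)/2}(\delta_2) = 1$ and $\chi_{n-1}(\delta_2) = -1$.

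The main obstacle is this last identification: the case split on the fixed-point equations is essentially mechanical, but correctly pinpointing that $\mu$ represents the nontrivial coset of $(\Fq^*)^{n-1}$ inside $(\Fq^*)^{(n-1)/2}$ relies essentially on the non-square status of $\beta^2$ in $\Fq^*$, together with the parity of the exponent $(n-1)/\mathfrak{g}(n-1)$ that appears in the computation of $(\mu\delta_2)^{(q-1)/\mathfrak{g}(n-1)}$. Once that coset membership is settled, the two character conditions in the statement follow at once and the formulas for $\mathcal{N}_0(1)$ and $\mathcal{N}_*(1)$ are immediate.
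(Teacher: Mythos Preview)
Your approach is essentially the same as the paper's: reduce the fixed-point system coming from (\ref{f2}) to the single equation $y^{n-1}=(\mu\delta_2)^{-1}$ with $\mu=(4\beta^2)^{(n-1)/2}$, count solutions according to whether $x=0$ or $x\ne 0$, and then translate the solvability condition on $\mu\delta_2$ into the character conditions on $\delta_2$ using that $\mu$ is an $(n-1)/2$-th power but not an $(n-1)$-th power in $\Fq^*$. The paper states this last fact directly; your coset formulation and the explicit use of $(4\beta^2)^{(q-1)/2}=-1$ make the same point with a bit more detail.
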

\begin{proof}
Let $\langle x,y\rangle\in\Fq^2$ be an element such that $f(x,y)=\langle x,y\rangle$. This implies that $x$ and $y$ satisfy 
\begin{equation}
    x=(4\beta^2)^{\frac{n-1}{2}}y^{n-1}\delta_1x\label{x}
\end{equation} 
and 
\begin{equation}
    y=(4\beta^2)^{\frac{n-1}{2}}y^{n-1}\delta_2y\label{y}.
\end{equation}
We have already seen that $\langle0,0\rangle$ satisfies the equations, as it is always a fixed point. Moreover, every element $\langle x,0\rangle$ is directed to $\langle 0,0\rangle$, thus, we suppose $y\neq 0$.  

If $x=0$, then $y^{n-1}=\frac{1}{(4\beta^2)^{\frac{n-1}{2}}\delta_2}$. We notice that $(4\beta^2)^{\frac{n-1}{2}}$ is a $\frac{n-1}{2}$ power but not a $n-1$ power, hence we only have such $y$ when $\delta_2$ is a $\frac{n-1}{2}$ power but not a $n-1$ power. In that case we have $\mathfrak{g}(n-1)+1$ fixed points of the form $\langle 0,y\rangle$, including $\langle 0,0\rangle$.

If $x\neq 0$, then (\ref{x}) and (\ref{y}) are only solvable in $\Fq^*$ if $\delta_1=\delta_2$. Moreover, we must have that $\delta_2$ is a $\frac{n-1}{2}$ power and not a $n-1$ power in $\Fq^*$. In this case there are $(q-1)\mathfrak{g}(n-1)$ elements that satisfy both equations.
\end{proof}

To analyse the remaining cycles, we iterate the function $f$ multiple times and observe its shape. 

\begin{Df}
    For every $j\in\N$, we define the function $\mathfrak{u}(j)=\frac{n^j-1}{n-1}$.
\end{Df}

From $(\ref{f2})$ we know that $f(x,y)=(-2y\beta)^{n-1}\langle\delta_1x,\delta_2y\rangle$. Applying $f$ again we obtain \begin{align*}
    f^{(2)}(x,y)=&((-2y\beta)^{n}\delta_2)^{n-1}\langle (-2y\beta)^{n-1}\delta_1^2x,(-2y\beta)^{n-1}\delta_2^2y\rangle\\ 
    =&((-2y\beta)^{n+1}\delta_2)^{n-1}\langle\delta_1^2x,\delta_2^2y\rangle\\
    =&(-2y\beta)^{n^2-1}\delta_2^{n-1}\langle\delta_1^2x,\delta_2^2y\rangle.
\end{align*}
By induction, one can see that \begin{align*}
    f^{(j)}(x,y)=&(-2y\beta)^{n^{j}-1}\delta_2^{n^{j-1}+n^{j-2}+\cdots+n-j+1}\langle\delta_1^jx,\delta_2^jy\rangle\\
    =&((-2y\beta)^{n-1}\delta_2)^{\mathfrak{u}(j)}\langle\big(\frac{\delta_1}{\delta_2}\big)^jx,y\rangle.
\end{align*}
Again, since $n-1$ is even, we may rewrite it as 

$$f^{(j)}(x,y)=((4\beta^2)^{\frac{n-1}{2}}\delta_2y^{n-1})^{\mathfrak{u}(j)}\langle\big(\frac{\delta_1}{\delta_2}\big)^jx,y\rangle.$$

We would like to obtain conditions on the possible length of cycles of $f$. For that we need some number theoretical properties.

\begin{Df}
For each prime $p$ and integer $a$ we define $\nup(a)$ as the greatest integer such that $p^{\nup(a)}$ divides $a$.
\end{Df}

\begin{Le}[Lifting The Exponent Lemma \cite{Manea}
]\label{Lemaj}
Let $n$ be an odd integer. \begin{enumerate}
    \item If $p$ is an odd prime and $\nup(n-1)>0$, then $\nup(\mathfrak{u}(j))=\nup(j)$.
    \item If $p=2$ and $j$ is odd, then $\nup(\mathfrak{u}(j))=\nup(j)=0$.
    \item If $p=2$, $j$ is even and $n\equiv 1\pmod 4$, then $\nup(\mathfrak{u}(j))=\nup(j)$.
    \item If $p=2$, $j$ is even and $n\equiv 3\pmod 4$, then $\nup(\mathfrak{u}(j))=\nup(\frac{n+1}{2})+\nup(j)$.
\end{enumerate}     
\end{Le}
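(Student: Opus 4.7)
The plan is to recognize that
$$\nup(\mathfrak{u}(j)) = \nup\!\left(\frac{n^j-1}{n-1}\right) = \nup(n^j - 1) - \nup(n-1),$$
and then reduce each of the four cases to a direct application (or a small variant) of the classical Lifting The Exponent Lemma, which states that for an odd prime $p$ with $p\mid a-b$ and $p\nmid ab$ one has $\nup(a^j-b^j)=\nup(a-b)+\nup(j)$, while for $p=2$ with $4\mid a-b$ and $a,b$ odd one has $\nu_2(a^j-b^j)=\nu_2(a-b)+\nu_2(j)$, and for $p=2$, $j$ even, $a,b$ odd with $2\,\|\,a-b$ one has $\nu_2(a^j-b^j)=\nu_2(a-b)+\nu_2(a+b)+\nu_2(j)-1$. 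I will take $a=n$ and $b=1$ throughout.

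For item (1), since $p$ is odd and $p\mid n-1$, the classical LTE gives $\nup(n^j-1)=\nup(n-1)+\nup(j)$, so $\nup(\mathfrak{u}(j))=\nup(j)$ immediately. For item (2), the observation is purely combinatorial: $\mathfrak{u}(j)=1+n+n^2+\cdots+n^{j-1}$ is a sum of $j$ odd integers (since $n$ is odd), and when $j$ is odd this sum is odd, so $\nu_2(\mathfrak{u}(j))=0$, which agrees with $\nu_2(j)=0$.

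For item (3), because $n\equiv 1\pmod 4$, we have $4\mid n-1$, so the $p=2$ version of LTE applies to give $\nu_2(n^j-1)=\nu_2(n-1)+\nu_2(j)$, hence $\nu_2(\mathfrak{u}(j))=\nu_2(j)$. For item (4), we have $n\equiv 3\pmod 4$, so only $2\,\|\,n-1$ and the previous form fails; here I would invoke the alternate $p=2$ LTE formula (valid for $j$ even), which yields
$$\nu_2(n^j-1) = \nu_2(n-1) + \nu_2(n+1) + \nu_2(j) - 1.$$
Subtracting $\nu_2(n-1)=1$ and using $\nu_2(n+1)-1 = \nu_2\!\left(\frac{n+1}{2}\right)$ gives the claimed identity $\nu_2(\mathfrak{u}(j))=\nu_2\!\left(\frac{n+1}{2}\right)+\nu_2(j)$.

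There is no substantive obstacle beyond invoking the standard lemma correctly; the only delicate point is case (4), where one must remember that the ``nice'' form of $2$-adic LTE fails when $n-1$ is only singly even and must be replaced by the version involving $\nu_2(a+b)$, and further that this version requires $j$ to be even (which is exactly the hypothesis). If a self-contained argument is preferred over citing LTE, I would instead prove the four cases by induction on $\nup(j)$, using the factorization $n^{jp}-1 = (n^j-1)\bigl(1 + n^j + n^{2j} + \cdots + n^{(p-1)j}\bigr)$ and analyzing the second factor modulo $p$ (or modulo $4$ in the $p=2$ cases) to track exactly one extra factor of $p$ at each inductive step; the four cases then correspond precisely to the four possibilities for how the geometric-series factor behaves $p$-adically.
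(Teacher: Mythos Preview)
Your argument is correct in all four cases: the reduction $\nup(\mathfrak{u}(j))=\nup(n^j-1)-\nup(n-1)$ together with the standard odd-prime LTE, the parity observation for item~(2), and the two $2$-adic variants of LTE for items~(3) and~(4) give exactly the stated formulas. The only point to be careful about, as you note, is that in case~(4) one must use the form $\nu_2(n^j-1)=\nu_2(n-1)+\nu_2(n+1)+\nu_2(j)-1$ (valid precisely because $j$ is even), and you handle this correctly.

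As for comparison with the paper: the paper does not prove this lemma at all. It is stated under the name ``Lifting The Exponent Lemma'' and used as a known result, with no accompanying proof or reference. Your proposal is therefore not an alternative approach but rather a full justification of what the paper takes for granted; it amounts to unpacking the lemma's name by invoking the classical LTE statements and checking that they specialize to the four listed identities. The optional inductive argument you sketch at the end would also work and is the standard way LTE itself is proved, so either route is fine.
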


Now we prove the following theorem.

\begin{Th}\label{Theoj}
    Let $j$ be an integer such that $f^{(j)}(x,y)=\langle x,y\rangle$, for some element with $y\neq0$. Let $\alpha$ be a generator of the multiplicative group $\Fq^*$ and let $l_0$ be the smallest nonegative integer such that $(4\beta^2)^{\frac{n-1}{2}}\delta_2=\alpha^{l_0}$. If $\delta_1=0$, then $j$ is a multiple of $\tau_0$, where   $$\tau_0=2^{\epsilon_0}\cdot\prod_{\overset{p\mid\frac{n-1}{\gcd(n-1,l_0)}}{p\neq2}}p^{\max\{0,\nup(q-1)-\nup(l_0)\}},$$ and $$\epsilon_0=\begin{cases}
    0& \text{if $\nu_2(l_0)>0$}\\
    \nu_2(q-1)-\nu_2(\frac{n+1}{2})& \text{if $\nu_2(l_0)=0$}.
\end{cases}$$ If $\gcd(n-1,l_0)=n-1$, we define $\tau_0=1$.

\end{Th}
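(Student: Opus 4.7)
The plan is to translate the orbit equation into a multiplicative condition in $\Fq^*$ and then verify the required divisibility prime by prime using Lemma~\ref{Lemaj}. Substituting $\delta_1=0$ into the iteration formula displayed just before the theorem yields
$$f^{(j)}(x,y)=\bigl((4\beta^2)^{(n-1)/2}\delta_2\, y^{n-1}\bigr)^{\mathfrak{u}(j)}\langle 0,y\rangle,$$
so any periodic point with $y\neq 0$ must have $x=0$, and $f^{(j)}(0,y)=(0,y)$ reduces to $\theta^{\mathfrak{u}(j)}=1$ where $\theta:=(4\beta^2)^{(n-1)/2}\delta_2\,y^{n-1}$. Writing $y=\alpha^m$ one has $\theta=\alpha^L$ with $L\equiv l_0+m(n-1)\pmod{q-1}$; as $m$ varies, $L$ ranges over the coset $l_0+\mathfrak{g}(n-1)\Z$ in $\Z/(q-1)\Z$. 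The divisibility $\tau_0\mid j$ then amounts to showing, for every prime $p$ and every admissible $L$, that $\nu_p(\tau_0)\le \nu_p(j)$.

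The key elementary observation is that whenever $\nu_p(l_0)<\nu_p(\mathfrak{g}(n-1))$, every representative $L$ of the coset satisfies $\nu_p(L)=\nu_p(l_0)$: indeed $L-l_0$ is divisible both by $\mathfrak{g}(n-1)$ and, modulo $q-1$, by $q-1$, hence by a power of $p$ strictly larger than $\nu_p(l_0)$ (recall $\mathfrak{g}(n-1)\mid q-1$). Consequently $\nu_p(\ord(\theta))=\nu_p(q-1)-\nu_p(l_0)>0$ uniformly over such $y$.

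For an odd prime $p\mid (n-1)/\gcd(n-1,l_0)$ one has $\nu_p(l_0)<\nu_p(n-1)$. If in addition $\nu_p(q-1)>\nu_p(l_0)$, then $\nu_p(l_0)<\min(\nu_p(n-1),\nu_p(q-1))=\nu_p(\mathfrak{g}(n-1))$, the observation applies, and case~1 of Lemma~\ref{Lemaj} gives $\nu_p(\mathfrak{u}(j))=\nu_p(j)$; combined with $\nu_p(\ord(\theta))\le \nu_p(\mathfrak{u}(j))$ this forces $\nu_p(j)\ge \nu_p(q-1)-\nu_p(l_0)=\nu_p(\tau_0)$. Otherwise $\max(0,\nu_p(q-1)-\nu_p(l_0))=0$ and there is nothing to verify. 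For $p=2$, when $\nu_2(l_0)>0$ there is nothing to check since $\epsilon_0=0$; when $\nu_2(l_0)=0$, every $L$ in the coset is odd (because $n-1$ is even), so $\nu_2(\ord(\theta))=\nu_2(q-1)>0$, ruling out odd $j$ by case~2 of the lemma. Applying case~3 if $n\equiv 1\pmod 4$ (in which $(n+1)/2$ is odd) or case~4 if $n\equiv 3\pmod 4$, one obtains $\nu_2(j)\ge \nu_2(q-1)-\nu_2((n+1)/2)=\epsilon_0$.

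The main obstacle I foresee is the $p=2$ analysis, since Lemma~\ref{Lemaj} comes in four sub-cases that must be reconciled to produce the single exponent $\epsilon_0$; the appearance of $\nu_2((n+1)/2)$ is exactly the LTE correction, so the proof hinges on matching this to the $(n+1)/2$ in the formula. A final bookkeeping step confirms the degenerate situation $\gcd(n-1,l_0)=n-1$: then $\nu_p(l_0)\ge \nu_p(n-1)$ for every prime $p\mid n-1$, so every odd-prime exponent vanishes and, since $\nu_2(n-1)\ge 1$, so does $\epsilon_0$, yielding $\tau_0=1$ as stated.
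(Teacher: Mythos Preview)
Your argument is correct and follows essentially the same route as the paper: reduce $f^{(j)}(0,y)=(0,y)$ to the multiplicative condition $\theta^{\mathfrak{u}(j)}=1$ with $\theta=\alpha^{l_0}y^{n-1}$, then analyze the constraint prime by prime via Lemma~\ref{Lemaj}. The paper phrases the intermediate step as the solvability criterion $\gcd(n-1,(q-1)/\mathfrak{g}(\mathfrak{u}(j)))\mid l_0$, whereas you pass directly through the observation that $\nu_p(L)=\nu_p(l_0)$ for every $L$ in the coset $l_0+\mathfrak{g}(n-1)\Z$ whenever $\nu_p(l_0)<\nu_p(\mathfrak{g}(n-1))$; these are equivalent bookkeepings of the same inequality, and the resulting case split on $p=2$ versus odd $p$ and the invocation of the four LTE sub-cases match the paper exactly.
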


\begin{proof}
    
If $\delta_1=0$ then $f^{(j)}(x,y)=\langle x,y\rangle$ if and only if  $((4\beta^2)^{\frac{n-1}{2}}\delta_2y^{n-1})^{\mathfrak{u}(j)}=1$ and $x=0$. Therefore, let $\zeta_{\mathfrak{u}(j)}\in\Fq^*$ be a $\mathfrak{u}(j)$th root of unity, then \begin{equation}
    (4\beta^2)^{\frac{n-1}{2}}\delta_2y^{n-1}=\zeta_{\mathfrak{u}(j)}.\label{eq1}
\end{equation} and we have that $y^{n-1}=\frac{\zeta_{\mathfrak{u}(j)}}{(4\beta^2)^{\frac{n-1}{2}}\delta_2}$. We notice that it only has a solution if $\frac{\zeta_{\mathfrak{u}(j)}}{(4\beta^2)^{\frac{n-1}{2}}\delta_2}$ is a $n-1$ power. 


Let $\alpha$ be a generator of $\Fq^*$ and  $(4\beta^2)^{\frac{n-1}{2}}\delta_2=\alpha^{l_0}$. We have that $\zeta_{\mathfrak{u}(j)}=\alpha^{\frac{q-1}{\mathfrak{g}(\mathfrak{u}(j))}t}$, for some $t\in \N$.   Then, we can find $j$ and $t$ such that
$$\mathfrak{g}(n-1)\ \text{divides}\  \frac{q-1}{\mathfrak{g}(\mathfrak{u}(j))}t-l_0$$
if and only if
\begin{equation}
    \gcd \left(\mathfrak{g}(n-1),\frac{q-1}{\mathfrak{g}(\mathfrak{u}(j))}\right)
=\gcd \left(n-1,\frac{q-1}{\mathfrak{g}(\mathfrak{u}(j))}\right)\quad\text{divides}\quad l_0.\label{Cond}
\end{equation}
Writing $q-1=rs$, where $r$ is the maximum such that $\gcd(r,n)=1$, we can find $j_0$ such that $n^{j_0}\equiv 1 \pmod{r(n-1)}$. Therefore  $\mathfrak{g}(\mathfrak{u}(j_0))=r$ and 
$\frac{q-1}{\mathfrak{g}(\mathfrak{u}(j_0))}=s$. As $\gcd(s,n-1)=1$, we conclude that there exists at least one pair $(j,t)$ such that $\mathfrak{g}(n-1)$ divides $\frac{q-1}{\mathfrak{g}(\mathfrak{u}(j))}t+l_0.$

For (\ref{Cond}) to be satisfied, for every $p$ prime we must have $$\min\{\nup(n-1),\nup(q-1)-\min\{\nup(\mathfrak{u}(j)),\nup(q-1)\}\}\leq \nup(l_0),$$ which is equivalent to \begin{equation}
    \min\{\nup(n-1),\max\{\nup(q-1)-\nup(\mathfrak{u}(j)),0\}\}\leq \nup(l_0).\label{Cond2}
\end{equation}
If $\nup(n-1)\leq\nup(l_0)$ then (\ref{Cond2}) holds. Thus, we look at $p$ such that $\nup(n-1)>\nup(l_0)$. Then, we need that \begin{equation}
    \max\{\nup(q-1)-\nup(\mathfrak{u}(j)),0\}\leq\nup(l_0)\label{Cond3}.
\end{equation}
Again, if $\nup(q-1)\leq\nup(l_0)$, then (\ref{Cond3}) is satisfied. On the other hand, if $p$ is such that $\nup(q-1)>\nup(l_0)$ we have to find the values of $j$ such that $\nup(\mathfrak{u}(j))\geq\nup(q-1)-\nup(l_0)$.

Using Lemma \ref{Lemaj}, if $\nu_2(l_0)>0$, then $\nu_2(n-1)\leq\nu_2(l_0)$ if $n\equiv3\pmod4$ or $\nu_2(\mathfrak{u}(j))=\nu_2(j)$ if $n\equiv1\pmod4$. Hence, to satisfy (\ref{Cond3}) we must have $j$ divisible by $p^{\nup(q-1)-\nup(l_0)}$, for every prime $p$ that satisfies $\nup(n-1)>\nup(l_0)$ and $\nup(q-1)>\nup(l_0)$. 

If $\nu_2(l_0)=0$, then we must have $\nu_2(\mathfrak{u}(j))\geq\nu_2(q-1)$. From Lemma \ref{Lemaj}, we have that it is equivalent to $\nu_2(j)\geq \nu_2(q-1)-\nu_2(\frac{n+1}{2})$. Thus, we have that $j$ must be a multiple of $2^{\nu_2(q-1)-\nup(\frac{n+1}{2})}$ and $p^{\nup(q-1)-\nup(l_0)}$, for every prime $p\neq2$ that satisfies $\nup(n-1)>\nup(l_0)$ and $\nup(q-1)>\nup(l_0)$.

\end{proof}

For $\delta_1\neq 0$, we divide the  connected components into three categories: the one that contains zero and the elements of the form $\langle x,0\rangle$, which was already described; the ones that contain elements of the form $\langle0,y\rangle$; and the ones that contain the elements in which both coordinates are different from zero.

From $f(x,y)=(-2y\beta)^{n-1}\langle\delta_1x,\delta_2y\rangle$, one can see that the image and preimage of any element $\langle0,y\rangle$, with $y\neq 0$, is also be of the form $\langle0,y'\rangle$. Hence the second type has no intersection with the other ones. One can also see that the cycles of this type follow the same conditions as the case where $\delta_1=0$, therefore, their length must be a multiple of $\tau_0$.

On the other hand, in the third case, in addition to the condition above, we also need that $j$ satisfy $\big(\frac{\delta_1}{\delta_2}\big)^j=1$. Thus, $j$ is a multiple of $$\tau:=\lcm(\ord\big(\frac{\delta_1}{\delta_2}\big), \tau_0).$$ This proves the following theorem

\begin{Th}\label{tau}
Let $\langle x,y\rangle$ be a periodic element. If $x=y=0$, then its cycle length is $1$. If $xy\neq 0$ and $\delta_1\delta_2\neq 0$, then its cycle length is a multiple of $\tau$. In all other cases its cycle length is a multiple of $\tau_0$.
\end{Th}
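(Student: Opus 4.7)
The plan is to apply the closed form
\[
f^{(j)}(x,y)=\bigl((4\beta^2)^{\frac{n-1}{2}}\delta_2 y^{n-1}\bigr)^{\mathfrak{u}(j)}\Bigl\langle\bigl(\tfrac{\delta_1}{\delta_2}\bigr)^{j}x,\;y\Bigr\rangle
\]
derived just above the statement, and then split into three cases according to which coordinates vanish and whether $\delta_1\delta_2=0$. The case $\langle x,y\rangle=\langle 0,0\rangle$ is a fixed point with cycle length $1$, settled directly from the defining formula.

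The main case is $xy\neq 0$ and $\delta_1\delta_2\neq 0$. Here the equation $f^{(j)}(x,y)=\langle x,y\rangle$ splits coordinate-wise. Since $y\neq 0$, the second coordinate forces
\[
\bigl((4\beta^2)^{\frac{n-1}{2}}\delta_2 y^{n-1}\bigr)^{\mathfrak{u}(j)}=1.
\]
Although Theorem \ref{Theoj} was phrased under the hypothesis $\delta_1=0$, its proof only analysed exactly this equation in $y$ and so applies verbatim, yielding that $j$ must be a multiple of $\tau_0$. Feeding this back into the first coordinate, the condition becomes $(\delta_1/\delta_2)^{j}x=x$; using $x\neq 0$ this requires $(\delta_1/\delta_2)^{j}=1$, i.e.\ $\ord(\delta_1/\delta_2)\mid j$. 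Combining the two divisibilities gives $\tau=\lcm(\ord(\delta_1/\delta_2),\tau_0)\mid j$, as claimed.

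For the remaining cases I would first rule out $y=0$: since $f(x,0)=\langle 0,0\rangle$ for every $x$, the only periodic element with $y=0$ is $\langle 0,0\rangle$, already handled. Hence it suffices to consider $y\neq 0$ together with either $x=0$ or $\delta_1\delta_2=0$. When $\delta_2=0$ the component analysis done earlier in Section 3 shows that every orbit collapses to $\langle 0,0\rangle$, so no such cycles exist and the statement is vacuous. In the remaining subcases (either $x=0$ with $\delta_1\delta_2\ne 0$, or $\delta_1=0$ and $\delta_2\ne 0$) the first-coordinate equation is automatically satisfied — either because $x=0$ makes it trivial, or because $\delta_1=0$ already forces the first coordinate to vanish after one iteration — so $f^{(j)}(x,y)=\langle x,y\rangle$ reduces to the same $y$-equation as above, and Theorem \ref{Theoj} gives $\tau_0\mid j$.

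The only subtle point, and the one I would be most careful with, is the justification that the proof of Theorem \ref{Theoj} really depends only on the $y$-coordinate equation and not on the extra hypothesis $\delta_1=0$ used to set up its statement; once this is observed the whole argument is a clean case split and a division of the fixed-point equation into its two coordinate components.
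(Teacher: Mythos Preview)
Your argument is correct and follows the same route as the paper: the paper derives the iteration formula, observes that the cycles with $x=0$ (or with $\delta_1=0$) are governed solely by the $y$-equation analysed in Theorem~\ref{Theoj} and hence have length divisible by~$\tau_0$, and then notes that when $xy\neq 0$ and $\delta_1\delta_2\neq 0$ the first coordinate imposes the extra constraint $(\delta_1/\delta_2)^j=1$, giving $\tau\mid j$. Your write-up is in fact a bit more explicit than the paper's in disposing of the boundary cases ($y=0$, $\delta_2=0$) and in flagging that the proof of Theorem~\ref{Theoj} concerns only the $y$-equation and therefore transfers to the $\delta_1\neq 0$ setting.
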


Now the cycles of this functional graph are narrowed to $1$-cycles, $(t\tau_0)$-cycles and $(t\tau)$-cycles, where $t$ is any positive integer.

\begin{Th}
If $\delta_1= 0$ and $t$ is a positive integer such that $t\tau_0>1$, then there are $$\mathcal{N}(t\tau_0)=\frac{1}{t\tau_0}\sum_{j\mid t}\mu\left(\frac{t}{j}\right)(\mathfrak{g}(\mathfrak{u}(j\tau_0))\gcd\left(n-1,\frac{q-1}{\mathfrak{g}(\mathfrak{u}(j\tau_0))}\right)-\mathcal{N}(1)+1).$$ 
$(t\tau_0)$-cycles, and those are the only cycles of length greater than $1$.

If $\delta_1\neq0$ and $t$ is a positive integer such that $t\tau_0>1$, then there are
\begin{equation*} \mathcal{N}(t\tau_0)=\frac{1}{t\tau_0}\sum_{j\mid t}\mu\left(\frac{t}{j}\right)(\mathfrak{g}(\mathfrak{u}(j\tau_0))\gcd\left(n-1,\frac{q-1}{\mathfrak{g}(\mathfrak{u}(j\tau_0))}\right)-\mathcal{N}_0(1))\end{equation*}  $(t\tau_0)$-cycles of elements with zero in the first coordinate
and \begin{equation*} \mathcal{N}(t\tau)=\frac{1}{t\tau}\sum_{j\mid t}\mu\left(\frac{t}{j}\right)((q-1)\mathfrak{g}(\mathfrak{u}(j\tau))\gcd\left(n-1,\frac{q-1}{\mathfrak{g}(\mathfrak{u}(j\tau))}\right)-\mathcal{N}_*(1))\end{equation*} $(t\tau)$-cycles of elements with the first coordinate different from zero, and those are the only cycles of length greater than $1$.
\end{Th}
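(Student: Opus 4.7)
My plan is to compute the number of fixed points of $f^{(j)}$ for $j = t\tau_0$ (and $j = t\tau$) directly, and then extract $\mathcal{N}(t\tau_0)$ and $\mathcal{N}(t\tau)$ by Möbius inversion. The crucial input from Theorem \ref{tau} is that every cycle length greater than $1$ is a multiple of $\tau_0$ (in the coordinate-axis case) or of $\tau$ (for elements with both coordinates nonzero), which collapses the Möbius inversion from divisors of $t\tau_0$ down to divisors of $t$.

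For $\delta_1 = 0$, the iteration formula $f^{(j)}(x,y) = \theta(y)^{\mathfrak{u}(j)}\langle (\delta_1/\delta_2)^j x, y\rangle$, where $\theta(y) = (4\beta^2)^{(n-1)/2}\delta_2 y^{n-1}$, forces any periodic point with $y \neq 0$ to satisfy $x = 0$ and $\theta(y)^{\mathfrak{u}(j)} = 1$. Writing $y = \alpha^k$ turns the latter into the linear congruence $k(n-1) \equiv -l_0 \pmod{(q-1)/\mathfrak{g}(\mathfrak{u}(j))}$ analyzed in the proof of Theorem \ref{Theoj}; solvability is guaranteed precisely when $\tau_0 \mid j$, and the number of solutions in $\Fq^*$ is $\mathfrak{g}(\mathfrak{u}(j))\gcd(n-1, (q-1)/\mathfrak{g}(\mathfrak{u}(j)))$. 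Adding $\langle 0,0\rangle$, the total number of fixed points of $f^{(t\tau_0)}$ equals $1 + \mathfrak{g}(\mathfrak{u}(t\tau_0))\gcd(n-1, (q-1)/\mathfrak{g}(\mathfrak{u}(t\tau_0)))$. Equating this with the orbit decomposition $\sum_{d\mid t\tau_0} d\,\mathcal{N}(d)$, which by Theorem \ref{tau} collapses to $\mathcal{N}(1) + \sum_{j\mid t}\, j\tau_0\,\mathcal{N}(j\tau_0)$, yields an identity that Möbius-inverts to the first formula; the correction $-\mathcal{N}(1)+1$ inside the sum absorbs the $\langle 0,0\rangle$ term and the length-$1$ cycles simultaneously.

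For $\delta_1 \neq 0$, I would split periodic points outside the zero component into Type II (points $\langle 0, y\rangle$ with $y \neq 0$) and Type III (points with $xy \neq 0$). Type II behaves exactly as in the $\delta_1 = 0$ analysis, since the first coordinate stays at $0$; the count of periodic points of period dividing $t\tau_0$ is $\mathfrak{g}(\mathfrak{u}(t\tau_0))\gcd(n-1, (q-1)/\mathfrak{g}(\mathfrak{u}(t\tau_0)))$, and subtracting $\mathcal{N}_0(1)$ before Möbius inversion yields the second formula. For Type III, periodicity of $f^{(j)}$ additionally requires $(\delta_1/\delta_2)^j = 1$, which by the definition $\tau = \lcm(\ord(\delta_1/\delta_2), \tau_0)$ is automatic for $j = t\tau$; the free choice of $x \in \Fq^*$ multiplies the Type II count by $q-1$, and subtracting $\mathcal{N}_*(1)$ before inverting produces the third formula.

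The main technical hurdle is uniformly handling the edge case $\tau_0 = 1$ (resp.\ $\tau = 1$), where nonzero fixed points can coexist with longer cycles and Theorem \ref{tau} becomes vacuous as a restriction on periods. Here the subtractions $-\mathcal{N}(1)+1$, $-\mathcal{N}_0(1)$, $-\mathcal{N}_*(1)$ are precisely what make the formulas valid, because $\sum_{j\mid t}\mu(t/j)\,c = 0$ for any constant $c$ whenever $t > 1$, so the corrections are irrelevant for $t > 1$ but correctly adjust the $t = 1$ boundary (subsumed in the hypothesis $t\tau_0 > 1$, resp.\ $t\tau > 1$). Verifying that Types II and III indeed partition the non-zero-component periodic points—even when $\delta_1 = \delta_2$—and that solvability of $\theta(y)^{\mathfrak u(j)}=1$ holds for $j = t\tau$ via $\tau_0 \mid \tau$, are the remaining bookkeeping checks.
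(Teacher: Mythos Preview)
Your proposal is correct and follows essentially the same route as the paper: count the solutions of $((4\beta^2)^{(n-1)/2}\delta_2 y^{n-1})^{\mathfrak{u}(k)}=1$ in $\Fq^*$, invoke Theorem \ref{tau} to collapse the divisor sum from $d\mid t\tau_0$ to $j\mid t$, and M\"obius-invert; for $\delta_1\neq 0$ split into the $x=0$ and $x\neq 0$ strata exactly as you do with Types II and III. Your linear-congruence formulation $k(n-1)\equiv -l_0 \pmod{(q-1)/\mathfrak{g}(\mathfrak{u}(j))}$ is just a repackaging of the paper's root-of-unity indexing $(-2y\beta)^{n-1}\delta_2=\zeta_{\mathfrak{u}(k)}^i$, and your explicit remark that $\sum_{j\mid t}\mu(t/j)\,c=0$ for $t>1$ makes transparent why the constant corrections $-\mathcal{N}(1)+1$, $-\mathcal{N}_0(1)$, $-\mathcal{N}_*(1)$ can sit inside the sum---a point the paper leaves implicit.
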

\begin{proof}
Let $\langle x,y\rangle$ be a nonfixed cyclic element. Thus, it must satisfy \begin{equation}
    f^{(k)}(x,y)=((-2y\beta)^{n-1}\delta_2)^{\mathfrak{u}(k)}\Big\langle\Big(\frac{\delta_1}{\delta_2}\Big)^{k}x,y \Big\rangle=\langle x,y\rangle \label{ftau}
\end{equation} for some $k>1$. By the previous theorem we have that $k=t\tau$, if $\delta_1\neq0$ and $x\neq0$, or $k=t\tau_0$, for some $t\in\Z^+$. 

As an initial case, let us suppose that $\delta_1=0$. Then, any cyclic element satisfies \begin{equation}
    f^{(k)}(x,y)=((-2y\beta)^{n-1}\delta_2)^{\mathfrak{u}(k)}\Big\langle0,y \Big\rangle=\langle x,y\rangle. \label{delta=0}
\end{equation} Consequently, we have that $x=0$ and $$((-2y\beta)^{n-1}\delta_2)^{\mathfrak{u}(k)}=1.$$ Hence  \begin{equation}
    (-2y\beta)^{n-1}\delta_2=\zeta_{\mathfrak{u}(k)}^i\label{yi}
\end{equation} for some $0\leq i< \mathfrak{g}(\mathfrak{u}(k))$.  From the proof of Theorem \ref{Theoj}, we have that (\ref{yi}) is solvable in $y$ only for $\frac{\mathfrak{g}(\mathfrak{u}(k))\gcd(n-1,\frac{q-1}{\mathfrak{g}(\mathfrak{u}(k))})}{\mathfrak{g}(n-1)}$ different $i$ modulo $\mathfrak{g}(\mathfrak{u}(k))$ and, for each $i$, there are $\mathfrak{g}(n-1)$ values for $y$ in $\Fq^*$ that solve $(\ref{yi})$. Therefore, there are $\mathfrak{g}(\mathfrak{u}(k))\gcd(n-1,\frac{q-1}{\mathfrak{g}(\mathfrak{u}(k))})$ nonzero elements that satisfy $(\ref{delta=0})$ for a given $k=t\tau_0$ when $\delta_1=0$.


If $t=1$, from Theorem \ref{tau} we conclude that the the only nonzero elements that satisfy $(\ref{delta=0})$ are the nonzero fixed points of $f$ and the points in $\tau_0$-cycles. Therefore the number of points in $\tau_0$-cycles is $\mathfrak{g}(\mathfrak{u}(\tau_0))\gcd(n-1,\frac{q-1}{\mathfrak{g}(\mathfrak{u}(\tau_0))})-\mathcal{N}(1)+1$ and $$\mathcal{N}(\tau_0)=\frac{1}{\tau_0}\cdot\left(\mathfrak{g}(\mathfrak{u}(k))\gcd(n-1,\frac{q-1}{\mathfrak{g}(\mathfrak{u}(k))})-\mathcal{N}(1)+1\right).$$
Now, let $t> 1$. Every point that satisfies $f^{(j\tau_0)}(x,y)=\langle x,y\rangle$, for $j$ a positive divisor of $t$, also satisfies $f^{(t\tau_0)}(x,y)=\langle x,y\rangle$. Consequently, the number of elements that satisfy $f^{(t\tau_0)}(x,y)=\langle x,y\rangle$ is the sum of all the elements that are in $j\tau_0$-cycles for every $j$ that divides $t$. In other words, $$\mathfrak{g}(\mathfrak{u}(t\tau_0))\gcd \left(n-1,\frac{q-1}{\mathfrak{g}(\mathfrak{u}(t\tau_0))}\right)-\mathcal{N}(1)+1=\sum_{j\mid t}j\tau_0 \mathcal{N}(j\tau_0).$$
By the M\"obius inversion formula, we conclude that \begin{equation} \mathcal{N}(t\tau_0)=\frac{1}{t\tau_0}\sum_{j\mid t}\mu\left(\frac{t}{j}\right)(\mathfrak{g}(\mathfrak{u}(j\tau_0))\gcd\left(n-1,\frac{q-1}{\mathfrak{g}(\mathfrak{u}(j\tau_0))}\right)-\mathcal{N}(1)+1).\label{cyclesdelta=0}\end{equation}

As seen before, if $\delta_1\neq0$, then we divide the cyclic elements in two sets: elements with $x=0$ and elements with $x\neq0$. We have that any periodic element with $x=0$ is contained in a cycle of length multiple of $\tau_0$ and we have a case analogous to when $\delta_1=0$. In this case, since we are fixing $x=0$, we must subtract only the nonzero fixed points with $x=0$. Therefore, the number of cycles of length $t\tau_0$ is \begin{equation} \mathcal{N}(t\tau_0)=\frac{1}{t\tau_0}\sum_{j\mid t}\mu\left(\frac{t}{j}\right)(\mathfrak{g}(\mathfrak{u}(j\tau_0))\gcd\left(n-1,\frac{q-1}{\mathfrak{g}(\mathfrak{u}(j\tau_0))}\right)-\mathcal{N}_0(1)).\end{equation}

When $x\neq 0$, the length of cycles is a multiple of $\tau$. Hence, we have $\mathfrak{g}(\mathfrak{u}(k))\gcd(n-1,\frac{q-1}{\mathfrak{g}(\mathfrak{u}(k))})$ choices for $y$ that satisfy $(\ref{delta=0})$ and, for each $y$, there are $q-1$ choices for $x$ different from zero that satisfy (\ref{ftau}). Thus, there are $(q-1)\mathfrak{g}(\mathfrak{u}(k))\gcd(n-1,\frac{q-1}{\mathfrak{g}(\mathfrak{u}(k))})$ elements that satisfy (\ref{ftau}). We now follow the same process as in the case where $\delta_1=0$, however, we must subtract the nonzero fixed elements with $x\neq0$. Thus, we conclude that \begin{equation*} \mathcal{N}(t\tau)=\frac{1}{t\tau}\sum_{j\mid t}\mu\left(\frac{t}{j}\right)((q-1)\mathfrak{g}(\mathfrak{u}(j\tau))\gcd\left(n-1,\frac{q-1}{\mathfrak{g}(\mathfrak{u}(j\tau))}\right)-\mathcal{N}_*(1)).\end{equation*}

\end{proof}

\subsection{The trees}

Finally, we investigate the behavior of the trees hanging from each cyclic element. Thus, we can find analogues to Lemma \ref{gamma} and  Theorem \ref{precyc}, in the case when $n$ is an odd integer. 

\begin{Le}\label{gammaodd}
 Let $\langle a_1,a_2\rangle$ a nonzero element in $\Fq\times\Fq$. If $\delta_1=0$, then $\#f^{-1}(a_1,a_2)\in \{0,q\cdot \mathfrak{g}(n)\}$. If $\delta_1\neq 0$, then $\#f^{-1}(a_1,a_2)\in \{0,\mathfrak{g}(n)\}$.
\end{Le}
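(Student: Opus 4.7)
The plan is to solve the defining system of equations for the preimage directly, using the explicit form in $(\ref{f2})$. Writing $\langle x,y\rangle \in f^{-1}(a_1,a_2)$ is equivalent to
\[
(4\beta^2)^{\frac{n-1}{2}} \delta_1 \, x\, y^{n-1} = a_1 \quad \text{and} \quad (4\beta^2)^{\frac{n-1}{2}} \delta_2 \, y^n = a_2,
\]
and throughout this section we already have $\delta_2 \neq 0$. I would then split into the two cases of the statement, relying on the elementary fact that for $c \in \Fq^*$ the equation $y^n = c$ has either $0$ or $\mathfrak{g}(n)$ solutions in $\Fq^*$, while for $c=0$ it has the unique solution $y=0$.

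If $\delta_1 = 0$, the first equation collapses to $a_1 = 0$. When $a_1 \neq 0$ there is no preimage. When $a_1 = 0$, nonzeroness of $\langle a_1,a_2\rangle$ forces $a_2 \neq 0$, and the second equation reduces to $y^n = a_2/((4\beta^2)^{(n-1)/2}\delta_2)$ with nonzero right-hand side, so it has $0$ or $\mathfrak{g}(n)$ solutions in $\Fq^*$. Because the first equation no longer constrains $x$, each such $y$ admits $q$ choices of $x \in \Fq$, giving $\#f^{-1}(a_1,a_2) \in \{0,\, q\cdot\mathfrak{g}(n)\}$.

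If $\delta_1 \neq 0$, I would first rule out $a_2 = 0$: it would force $y=0$ from the second equation and hence $a_1=0$ from the first, contradicting nonzeroness of $\langle a_1,a_2\rangle$. Thus $a_2 \neq 0$, and again the equation $y^n = a_2/((4\beta^2)^{(n-1)/2}\delta_2)$ has $0$ or $\mathfrak{g}(n)$ solutions in $\Fq^*$. For each such $y$, the coefficient $(4\beta^2)^{(n-1)/2}\delta_1 y^{n-1}$ is nonzero, so the first equation determines $x$ uniquely. This yields $\#f^{-1}(a_1,a_2) \in \{0, \mathfrak{g}(n)\}$.

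There is no real obstacle here; the only point requiring care is the clean separation of sub-cases according to whether $a_1$ or $a_2$ vanishes, together with the standard count of fibers of the $n$th-power map on $\Fq^*$.
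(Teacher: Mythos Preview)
Your proof is correct and takes essentially the same approach as the paper's: both reduce to the standard fiber count of the $n$th-power map on $\Fq^*$. The only cosmetic difference is that you solve the system sequentially (first for $y$, then for $x$), whereas the paper takes two arbitrary preimage elements and shows they differ by an $n$th root of unity; the underlying content is identical.
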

\begin{proof}
    If $f^{-1}(a_1,a_2)$ is non-empty and $\langle x,y\rangle\in f^{-1}(a_1,a_2)$, then it satisfies \begin{equation}
        (4\beta^2)^{\frac{n-1}{2}}y^{n-1}\delta_1x=a_1\ \text{and}\ (4\beta^2)^{\frac{n-1}{2}}y^{n-1}\delta_2y=a_2. \label{eq}
    \end{equation} 

    
    If $\langle x',y'\rangle$ is an element in $f^{-1}(a_1,a_2)$, then it follows that $(-2y\beta)^{n-1}\delta_1x=(-2y'\beta)^{n-1}\delta_1x'$ and $(-2y\beta)^{n-1}\delta_2y=(-2y'\beta)^{n-1}\delta_2y'$. The last equation implies that $y^n=(y')^n$ and $(\frac{y}{y'})^n=1$. If $\mathfrak{g}(n)=1$, then we must have $y=y'$. If $\mathfrak{g}(n)>1$, then $y$ and $y'$ must differ by a $n$th root of unity. 
    
    If $\delta_1\neq0$, then we also conclude that $x=x'$, if $\mathfrak{g}(n)=1$, or $x$ and $x'$ differ by the same $n$th root of unity, if $\mathfrak{g}(n)>1$. Hence, there are exactly $\mathfrak{g}(n)$ solutions to (\ref{eq}). On the other hand, if $\delta_1=0$, then the last equation holds true for any value of $x'$. Therefore there are $q\cdot\mathfrak{g}(n)$ solutions to (\ref{eq}).
\end{proof}

\begin{Obs}
From this proof, we conclude that when $\delta_1=0$, any element with a nonempty preimage must be of the form $\langle 0,y\rangle$. 
\end{Obs}

We now prove an equivalent theorem to Theorem \ref{precyc}, for the case when $n$ is odd.

\begin{Th}\label{treeodd}
    Let $n$ be an odd integer and write $q-1=sr$, where $r$ is the greatest integer such that $gcd(r,n)=1$. If $\mathfrak{g}(n)>1$ we define $e$ as the largest integer such that $\mathfrak{g}(n^e)<s$. Then, any nonzero periodic element in the functional graph of $f$ is the root of a tree with height $e+1$. 

    If $\mathfrak{g}(n)=1$, then there are no trees hanging from nonzero periodic elements.
\end{Th}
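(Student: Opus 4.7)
The plan is to mirror the proof of Theorem \ref{precyc}, substituting Lemma \ref{gammaodd} for Lemma \ref{gamma} and using the same diagonal-roots-of-unity strategy. The argument naturally splits according to whether $\delta_1=0$, and for each value of $\delta_1$ according to whether $\mathfrak{g}(n)=1$ or $\mathfrak{g}(n)>1$.

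For $\mathfrak{g}(n)=1$ with $\delta_1\neq 0$, Lemma \ref{gammaodd} gives $\#f^{-1}(\langle a_1,a_2\rangle)\in\{0,1\}$, so the unique preimage of any cyclic element is its own cyclic predecessor and no tree hangs.

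For $\mathfrak{g}(n)>1$ with $\delta_1\neq 0$, I would prove by induction on $k$ that the vertices at level $k$ of the tree rooted at a nonzero cyclic element $\langle x_0,y_0\rangle$ are exactly the points $\langle\zeta_{n^k}x_k,\zeta_{n^k}y_k\rangle$, where $\langle x_k,y_k\rangle$ is the chain of cyclic preimages and $\zeta_{n^k}$ ranges over the $n^k$-th roots of unity in $\Fq$ that are not $n^{k-1}$-th roots. The engine of the induction is the identity
$$f(\zeta x,\zeta y)=(4\beta^2)^{\frac{n-1}{2}}(\zeta y)^{n-1}\langle\delta_1\zeta x,\delta_2\zeta y\rangle=\zeta^n f(x,y)=f(x,y)$$
for every $n$-th root of unity $\zeta\in\Fq$, which combined with Lemma \ref{gammaodd} implies that every nonempty fiber of $f$ is a single orbit of the diagonal action of the $n$-th roots of unity in $\Fq$. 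The inductive step then unwinds $f(\langle x,y\rangle)=\langle\zeta_{n^{k-1}}x_{k-1},\zeta_{n^{k-1}}y_{k-1}\rangle$ to the relations $y=\theta y_k$ with $\theta^n=\zeta_{n^{k-1}}$ and $x=(x_k/y_k)y$, exactly as in Theorem \ref{precyc}; solvability in $\Fq$ amounts to the existence of a fresh $n^k$-th root of unity, i.e., to $\mathfrak{g}(n^k)>\mathfrak{g}(n^{k-1})$. By maximality of $e$, one has $\mathfrak{g}(n^{e+j})=\mathfrak{g}(n^{e+1})=s$ for every $j\geq 1$, so no further levels appear and the recursion terminates precisely at level $e+1$.

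The main obstacle I expect is the case $\delta_1=0$, where Lemma \ref{gammaodd} inflates each fiber from size $\mathfrak{g}(n)$ to $q\cdot\mathfrak{g}(n)$. The Remark after that lemma rescues the argument: every element with nonempty preimage is forced to lie on the line $x=0$, so the backbone of the tree is controlled by the one-variable monomial map $y\mapsto\delta_2(4\beta^2)^{(n-1)/2}y^n$ on $\Fq^*$, which admits the same roots-of-unity accounting and produces the same height $e+1$. The extra $(q-1)\mathfrak{g}(n)$ preimages with $x\neq 0$ at each internal vertex are automatically leaves by the Remark, so they only decorate the backbone without affecting its height. This bookkeeping, together with the routine verification that the level-$k$ description is exhaustive, completes the proof.
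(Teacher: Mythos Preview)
Your proposal is correct and follows essentially the same approach as the paper: both arguments use Lemma \ref{gammaodd} to identify each nonempty fiber with a diagonal orbit of $n$-th roots of unity (with the extra factor of $q$ when $\delta_1=0$), prove by induction that the level-$k$ vertices correspond to $n^k$-th roots of unity that are not $n^{k-1}$-th roots, and terminate at level $e+1$ via $\mathfrak{g}(n^{e+j})=s$. The only cosmetic difference is that the paper runs the two $\delta_1$ cases in parallel via an $x_k'$ notation, whereas you separate them and phrase the $\delta_1=0$ case as a one-variable monomial map decorated by leaves; your explicit homogeneity identity $f(\zeta x,\zeta y)=\zeta^n f(x,y)$ is implicit in the paper's computations.
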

\begin{proof}
    Let $\langle x_0,y_0\rangle$ be a nonzero periodic element and, for all $i\geq 1$, let $\langle x_i,y_i\rangle$ be a periodic point such that $$f(x_i,y_i)=\langle x_{i-1},y_{i-1}\rangle.$$ 
    If $\mathfrak{g}(n)=1$, then from Lemma \ref{gamma}, we get that the only element in $f^{-1}(x_{0},y_{0})$ is $\langle x_1,y_1\rangle$. Hence, there are no trees hanging from nonzero periodic elements.
    
    If $\mathfrak{g}(n)>1$, we have that there are nontrivial $n$th roots of unity. From Lemma \ref{gamma}, we get that any nonperiodic element is in $f^{-1}(x_0,y_0)$ if and only if it is of the form $\langle x_1',\zeta_{n} y_1\rangle $, where $x_1'=\zeta_{n}x_1$ if $\delta_1\neq0$, or $x_1'$ is any element in $\Fq$ otherwise.

   Suppose that $\langle x,y\rangle \in f^{-1}(\zeta_{n}x_1, \zeta_{n} y_1)$. From the remark made previously, we have that $x_1'=0$, if $\delta_1=0$. In any case, this means that \begin{equation*}
       (-2y\beta)^{n-1}\delta_2y=\zeta_{n} y_1.
   \end{equation*} Moreover, we have that 
   $(-2y_2\beta)^{n-1}\delta_2y_2=y_1$. Then, it follows that \begin{equation*}
       y^{n}=\zeta_{n}y_2^{n},
   \end{equation*}  which has a solution if and only if $\zeta_{n}$ is a $n$th power in $\Fq$. 
   Therefore, $\langle x_1',\zeta_{n} y_1\rangle $ has nonempty preimage if and only if $\zeta_{n}$ is a $n$th power in $\Fq$, that is, if there are $n^2$th roots of unity in $\Fq$ that are not $n$th roots of unity.
   
   If $s=\mathfrak{g}(n)$, then $e=0$ and, for any $k\geq 1$, the number of $n^k$th roots of unity is $\mathfrak{g}(n^k)=s.$ Therefore, every $n^k$th root of unity is also a $n$th root of unity. Hence $\zeta_{n}$ is not a $n$th power and  $\langle \zeta_{n} x_1,\zeta_{n} y_1\rangle $ has no preimage. Thus, every cyclic element is a root of a tree of hight $1$.
   

    Now, suppose that $s>\mathfrak{g}(n)$. As an induction hypothesis suppose that if $1\leq k<e$, then $\langle x,y\rangle$ is an element in the $k$th level of the tree with root $\langle x_0,y_0\rangle$ if and only if $\langle x,y\rangle=\langle x_{k}',\zeta_{n^k}y_{k}\rangle$, where $\zeta_{n^k}\in\Fq$ is a $n^k$th root of unity that is not a $n^{k-1}$th root of unity and $x_{k}'=\zeta_{n^k}x_k$, if $\delta\neq 0$ or $x_k$ is any element in $\Fq$ otherwise.

   Let $\zeta_{n^{k+1}}$ be a $n^{k+1}$th root of unity that is not a $n^k$th root of unity.  First, we suppose that $\delta_1\neq0$. We have that $$f(\zeta_{n^{k+1}}x_{k+1},\zeta_{n^{k+1}}y_{k+1})=\langle\zeta_{n^{k+1}}^nx_{k},\zeta_{n^{k+1}}^ny_{k}\rangle.$$ Since $(\zeta_{n^{k+1}}^n)^{n^k}=1$ and $(\zeta_{n^{k+1}}^n)^{n^{k-1}}=\zeta_{n^{k+1}}^{n^k}\neq 1$, by the induction hypothesis, it follows that $\langle\zeta_{n^{k+1}}^nx_{k},\zeta_{n^{k+1}}^ny_{k}\rangle$ is an element in the $k$th level of the tree with root $\langle x_0,y_0\rangle$. Hence, $\langle \zeta_{n^{k+1}}x_{k+1},\zeta_{n^{k+1}}y_{k+1}\rangle$ is an element in the $(k+1)$th level.  If $\delta_1=0$, then $$f(x,\zeta_{n^{k+1}}y_{k+1})=\langle0,\zeta_{n^{k+1}}^ny_{k}\rangle$$ for any $x\in\Fq$ and the same holds.

    Conversely, suppose that $$\langle x,y\rangle \in f^{-1}(x_{k}',\zeta_{n^{k}}y_{k}),$$ where $x_{k}'=\zeta_{n^{k}}x_{k}$ if $\delta\neq 0$ or $x_{k}'$ is any element in $\Fq$ otherwise. In any case, $y$ must satisfy $(-2y\beta)^{n-1}\delta_2y=\zeta_{n^{k}}y_{k}$. Moreover, by definition $(-2y_{k+1}\beta)^{n-1}\delta_2y_{k+1}=y_{k}$, which implies that $$(-2y\beta)^{n-1}\delta_2y=\zeta_{{n}^{k}}(-2y_{k+1}\beta)^{n-1}\delta_2y_{k+1}.$$ Hence,  it follows that $y^n=\zeta_{n^{k}}y_{k+1}^n$ and $y=\theta y_{k+1}$, where $\theta^n=\zeta_{n^{k}}$. Furthermore, we have that $\theta^{n^k}=\zeta_{n^{k}}^{n^{k-1}}\neq 1$ and $\theta^{n^{k+1}}=\zeta_{n^{k}}^{n^k}=1$. Hence, $\theta$ is a $n^{k+1}$th root of unity and not a $n^k$th root of unity. Therefore, if $\langle x,y\rangle$ is an element in the $k$th level of the tree with root $\langle x_0,y_0\rangle$, then $\langle x,y\rangle=\langle \zeta_{n^k}x_{k},\zeta_{n^k}y_{k}\rangle$ if $\delta_1\neq0$, or $\langle x,y\rangle=\langle x,\zeta_{n^k}y_{k}\rangle$, otherwise, where $\zeta_{n^k}\in\Fq$ is a $n^k$th root of unity that is not a $n^{k-1}$th root of unity and $x$ is any element in $\Fq$.

    Finally, for any $j> 1$ we have that $\mathfrak{g}(n^{e+j})=\mathfrak{g}(n^{e+1})=s$. Hence, any $n^{e+j}$th roots of unity are also $e^{e+1}$ roots of unity, and there are no elements in the $(e+j)$th level.


\end{proof}

In the case where $\delta_1\neq0$, Theorem \ref{tree} also holds for odd choices of $n$. Hence, the hanging trees behave in the same way. However, when $\delta_1=0$, every element with nonempty preimage has $q\cdot\mathfrak{g}(n)$ elements in its preimage instead of $\mathfrak{g}(n)$ elements. Thus, we have the following

\begin{Df}
    Let $n$ be a fixed integer with $\mathfrak{g}(n)>1$, and write $q-1=sr$, where $r$ is the greatest integer such that $\gcd(r,n)=1$. We define the tree $\T_n^0(1)$ as a single vertex with $q\cdot \mathfrak{g}(n)-1$ vertices directed to it.
    
    Let $e$ be the greatest integer such that $\mathfrak{g}(n^e)<s$. For every $1\leq i\leq e$ we define the tree $\T_n^0(i+1)$ in the following way: First we label the elements in the $i$th level of $\T_n^0(i)$ from left to right in ascending order skipping the numbers that are multiples of $q\cdot\frac{\mathfrak{g}(n^i)}{\mathfrak{g}(n^{i-1})}$. Then we attach $q\cdot\mathfrak{g}(n)$ new vertices to each vertex labeled with a multiple of $q\cdot\frac{\mathfrak{g}(n^{i})\mathfrak{g}(n)}{\mathfrak{g}(n^{i+1})}$.
\end{Df}

\begin{Th}
    Let $n$ be a fixed integer and $q-1=rs$, where $r$ is the greatest integer such that $\gcd(n,r)=1$. Let $e$ be the largest integer such that $\mathfrak{g}(n^e)<s$. If $\delta_2=0$, then each nonzero cyclic element in the functional graph of $f$ is the root of a tree isomorphic to $\T_n^0(e+1)$ 
\end{Th}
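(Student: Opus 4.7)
The plan is to engage with the statement as written while flagging the mild inconsistency it carries with earlier material. Under the literal hypothesis $\delta_2=0$, the first theorem of Section 3 already forces the entire functional graph of $f$ to collapse onto a single component whose unique periodic vertex is $\langle 0,0\rangle$. Consequently there are no nonzero cyclic elements, and the claim as written holds vacuously. The substantive content made visible by Lemma \ref{gammaodd}, Theorem \ref{treeodd}, and the discussion introducing $\T_n^0$ is the parallel statement in which the $q\cdot\mathfrak{g}(n)$-to-one preimage actually appears, namely the case $\delta_1=0$; I outline that argument, since the literal reading is trivial.

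Working in that setting, my strategy is to build an explicit graph isomorphism between the tree hanging from a nonzero periodic element and $\T_n^0(e+1)$ by induction on the level $i$. By Theorem \ref{treeodd}, the $i$th level of the hanging tree is in bijection with the set $R_i\subset\Fq$ of $n^i$th roots of unity that are not $n^{i-1}$th roots of unity, and the descent from level $i$ to level $i-1$ is realized by the $n$th power map $\zeta\mapsto\zeta^n$. At the base of the induction, Lemma \ref{gammaodd} supplies $q\cdot\mathfrak{g}(n)-1$ immediate preimages of the root, matching $\T_n^0(1)$ exactly.

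For the induction step, fix a generator $\alpha$ of $\Fq^*$ and write each element of $R_i$ as $\alpha^{\frac{q-1}{\mathfrak{g}(n^i)}t}$ with $1\le t<\frac{q-1}{\mathfrak{g}(n^i)}$ and $t$ not a multiple of $\frac{\mathfrak{g}(n^i)}{\mathfrak{g}(n^{i-1})}$. Reusing the generator-change trick from the proof of Theorem \ref{tree}, I would pass from $\alpha$ to $\alpha'=\alpha^{n/\mathfrak{g}(n)}$, which remains a generator of $\Fq^*$, and translate the condition that a level-$(i-1)$ vertex $\zeta_{n^{i-1}}$ admits an $n$th root lying in $R_i$ into a divisibility of its exponent in the $\alpha'$-basis by $\frac{\mathfrak{g}(n^{i-1})\mathfrak{g}(n)}{\mathfrak{g}(n^i)}$. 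This is precisely the attachment rule defining $\T_n^0(i+1)$, and Lemma \ref{gammaodd} guarantees that each such vertex gains exactly $q\cdot\mathfrak{g}(n)$ new children, as required. The terminal level is handled by the observation $\mathfrak{g}(n^{e+j})=\mathfrak{g}(n^{e+1})=s$ for $j\ge 1$, so no new vertices appear beyond level $e+1$.

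The hard part is not any genuine new mathematics but rather bookkeeping: faithfully reproducing the generator-change calculation of Theorem \ref{tree} so that the exponent congruences built into $\T_n^0$ match the preimage-solvability conditions coming from $\delta_1=0$ at every level simultaneously. The one genuinely new ingredient, the factor $q$ inside $q\cdot\mathfrak{g}(n)$, propagates uniformly through every level because, with $\delta_1=0$, the first coordinate of any element with nonempty preimage is unconstrained and runs freely over $\Fq$; this accounts for the replacement of $\mathfrak{g}(n)$ by $q\cdot\mathfrak{g}(n)$ relative to the even case $\T_n(e+1)$.
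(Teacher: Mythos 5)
Your proposal is correct and takes essentially the same route as the paper, whose entire proof is a one-line adaptation of Theorem~\ref{tree}: the level-to-roots-of-unity correspondence of Theorem~\ref{treeodd} is reused, except that the roots of unity now represent only the elements of the form $\langle 0,y\rangle$ (all other vertices being leaves), with Lemma~\ref{gammaodd} supplying the preimage count $q\cdot\mathfrak{g}(n)$. Your reading of the hypothesis is also what the paper intends --- under the literal hypothesis $\delta_2=0$ the first theorem of Section~3 leaves $\langle 0,0\rangle$ as the only periodic point, so the statement is vacuous, and the discussion preceding the definition of $\T_n^0$ shows the intended case is $\delta_1=0$ --- though note one slip in your closing paragraph: when $\delta_1=0$ an element with nonempty preimage must have \emph{zero} first coordinate (this is the paper's Remark after Lemma~\ref{gammaodd}); what runs freely over $\Fq$ is the first coordinate of the elements \emph{inside} that preimage, which is where the factor $q$ comes from.
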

\begin{proof}
    The proof follows in the same direction as in Theorem \ref{tree}, however, in this case the roots of unity represent only the elements of the form $\langle 0,y\rangle$, as all the others have an empty preimage. 
\end{proof}

\begin{Ex}
Let $q=13$, $n=3$ and $f(X)=(X^{13}+3X)(X^{13}-X)^{2}$. We have that $\delta_1=4$ and $\delta_2=2$, hence, $\chi_1(\delta_2)=1$ and $\chi_2(\delta_2)=-1$, and it follows that $$\mathcal{N}(1)=0+2+1=3.$$
Moreover, we have that $\mathfrak{g}(n)=3=s$, thus, every tree attached to a nonzero periodic element is isomorphic to $\T_3(1)$. Therefore, we have one component isomorphic to 
\begin{figure}[H]
	\begin{center}
		\includegraphics[scale=0.5]{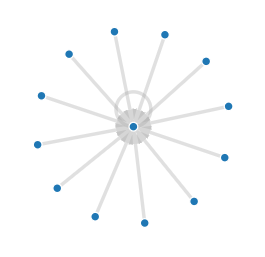}
	\end{center}
	\caption{Connected component that contains zero in the functional graph of $f(X)=(X^{13}+3X)(X^{13}-X)^{2}$ over $\F_{169}$.}
\end{figure}
which is the component that contains zero, and two components isomorphic to 
\begin{figure}[H]
	\begin{center}
		\includegraphics[scale=0.5]{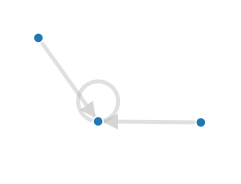}
	\end{center}
	\caption{Connected component that contains a fixed element in the functional graph of $f(X)=(X^{13}+3X)(X^{13}-X)^{2}$ over $\F_{169}$.}
\end{figure}

For the greater cycles, we first calculate $\tau_0$. Let $\beta\in\Fqd$ be such that $\beta^2=2$ and take $\alpha=2$ a generator of $\Fq^*$. Then,  $4\beta^2\delta_2=2^3$, and we get that $l_0=4$. Since $\gcd(n-1,l_0)=1$ we have $\tau_0=1$. We start with the cycles of nonzero elements with zero in the first coordinate. If $t=1$, then $t\tau_0=1$, hence we start with $t=2$. Therefore, we obtain $$\mathcal{N}(2)=\frac{1}{2}[-2+4]=1$$ and there is one component isomorphic to 
\begin{figure}[H]
	\begin{center}
		\includegraphics[scale=0.5]{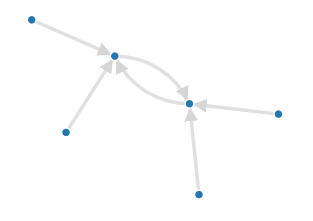}
	\end{center}
	\caption{Connected component that contains a cycle of size $2$ in the functional graph of $f(X)=(X^{13}+3X)(X^{13}-X)^{2}$ over $\F_{169}$.}
\end{figure} Any other choice of $t$ results in zero.

Now for the cycles of elements with the first coordinate different than zero we have $\tau=12$, as $\frac{\delta_1}{\delta_2}=2$. Hence, taking $t=1$ we have $$\mathcal{N}(12)=\frac{48}{12}=4$$ and there are $4$ components isomorphic to \begin{figure}[H]
	\begin{center}
		\includegraphics[scale=0.5]{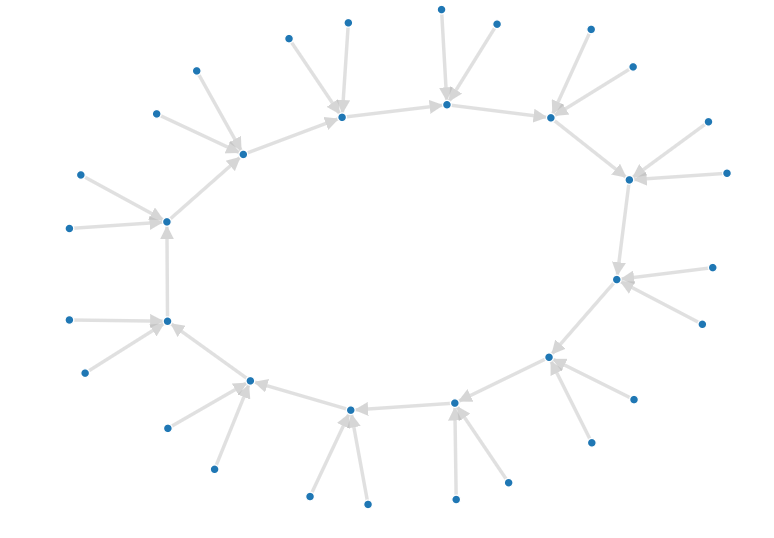}
	\end{center}
	\caption{Connected component that contains a cycle of size $12$ in the functional graph of $f(X)=(X^{13}+3X)(X^{13}-X)^{2}$ over $\F_{169}$.}
\end{figure} and those are the only components in the functional graph of $f$.
\end{Ex}
\begin{Ex}
Let $q=13$, $n=5$ and $f(X)=(X^{13}+3X)(X^{13}-X)^{4}$. Again, we have $\delta_1=4$ and $\delta_2=2$. However, $\chi_2(\delta_2)=-1$, hence $\mathcal{N}(1)=0+0+1=1$ and the only cycle of length one is the one that contains zero, which is isomorphic to
\begin{figure}[H]
	\begin{center}
		\includegraphics[scale=0.5]{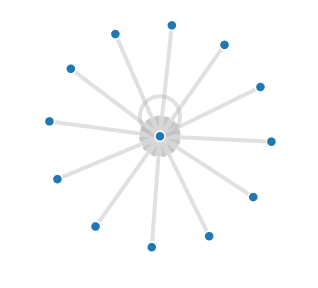}
	\end{center}
	\caption{Connected component that contains zero in the functional graph of $f(X)=(X^{13}+3X)(X^{13}-X)^{4}$ over $\F_{169}$.}
\end{figure}

Moreover, we have that $\mathfrak{g}(n)=1$, thus there are no hanging trees in the nonzero cyclic elements.

For the cycles greater than $1$, we first calculate $\tau_0$. Again, taking $\beta^2=2$ and $\alpha=2$, we obtain $l_0=7$ and $\tau_0=4$. Hence, for the cycles of elements with zero in the first coordinate, taking $t=1$ we have $$\mathcal{N}(4)=\frac{12}{4}=3$$ and there are $3$ components isomorphic to \begin{figure}[H]
	\begin{center}
		\includegraphics[scale=0.5]{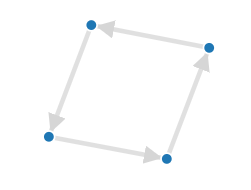}
	\end{center}
	\caption{Connected component that contains a cycle of length $4$ in the functional graph of $f(X)=(X^{13}+3X)(X^{13}-X)^{4}$ over $\F_{169}$.}
\end{figure} For any other choice of $t$ we get $0$.
Since $\frac{\delta_1}{\delta_2}=2$, we have $\tau=12$ and, taking $t=1$, we have $$\mathcal{N}(12)=\frac{144}{12}=12,$$ hence,there are $12$ components isomorphic to \begin{figure}[H]
	\begin{center}
		\includegraphics[scale=0.5]{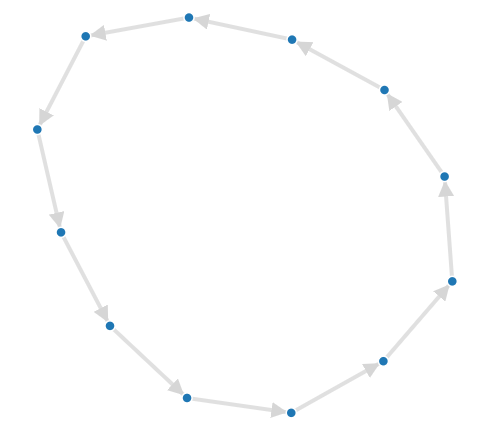}
	\end{center}
	\caption{Connected component that contains a cycle of length $12$ in the functional graph of $f(X)=(X^{13}+3X)(X^{13}-X)^{4}$ over $\F_{169}$.}
\end{figure} and zero for any other choice of $t$. Therefore, those are the only components in the functional graph of $f$. 
\end{Ex}



\begin{thebibliography}{}
\bibitem{BBS} Blum L., Blum M., Shub M. {\em A simple unpredictable pseudorandom number generator}. SIAMJournal on Computing, {\bf 15} (1986) 364-383

\bibitem{BrTx} Brochero F., Teixeira H. {\em On the functional graph of $f(X)=x(x^{q+1}+aX^2)$ over quadratic extensions of finite fields}. Finite Fields and Their Applications, {\bf 88} (2023) 


\bibitem{ChSh} Chou, W.S., Shparlinski, I.E. {\em On the cycle structure of repeated exponentiation modulo prime}. Jour. of Number Theory {\bf 107}  (2004) 345-356

\bibitem{Ei-Eh}  Eichenauer-Herrmann J., Herrmann E., Wegenkittl S. {\em A survey of quadratic and inversive congruential pseudorandom numbers.} Monte Carlo and quasi-Monte Carlo methods  66–97, Lect. Notes Stat., (1996)  127


\bibitem{Gas}  Gassert, T. {\em Chebyshev action on finite fields}. Disc Math {\bf 315-316} (2014) 83-94


\bibitem{Manea} Manea M. {\em Some $a^n \pm b^n$ Problems in Number Theory }. Mathematics Magazine {\bf 79} (2006) 140-145

\bibitem{MaPaQu}  Martins, R., Panario, D., Qureshi, C. {\em A survey on iterations of mappings over finite fields}. Radon Series on Computational and Applied Mathematics {\bf 23} (2019) 135-172


\bibitem{NySo} Nyang D., Song J. {\em Fast digital signature scheme based on the quadratic residue problem}. Electronics Letters, {\bf 33} (1997) 205-206


\bibitem{PeMoMuYu}  Peinado, A., Montoya, F., Mu\~noz, J., Yuste, A.  {\em Maximal periods of $x^2+c$ in $\F_q$}. LNCS {\bf 2227},  (2001) 219-228 

\bibitem{QurPan1} Qureshi, C., Panario, D.  {\em R\'edei actions on finite fields and multiplication map in cyclic groups} SIAM Journal on Discrete Mathematics {\bf 29} (2015) 1987-2016



\bibitem{QurPan2}  Qureshi, C., Panario, D.  {The graph structure of the Chebyshev polynomial over finite fields and applications}, Designs, Codes and Cryptography {\bf 87} (2019) 393-416



\bibitem{Rog1}  Rogers, T. {\em The Graph of the square mapping on the prime fields},  Disc Math. {\bf 148}  (1996) 317-324 


\bibitem{Ugo1} Ugolini, S. {\em Graphs associated with the math $x\mapsto x+x^{-1}$ in finite fields of characteristic three and five}. Jour. of Number Theory {\bf 133} (2013)1207-1228



\bibitem{Ugo2} Ugolini, S. {\em Functional graphs of rational maps induced by endomorphisms of ordinary elliptic curves over finite fields} Periodica Mathmatica Hungarica {\bf 77} (2018) 237-260

\bibitem{VaSh} Vasiga, T., Shallit, J. {\em On the iteration of certain quadratic maps over $GF(p)$}. Disc Math {\bf 227}  (2004) 219-240



\end{thebibliography}
\end{document}